\DeclareMathOperator{\Graph}{Graph}
\def\H{\bf {H}}
\def\v{\operatorname{v}}
\def\real     #1{{\mathbb R^{#1}}}
\def\natural  #1{{\mathbb N^{#1}}}
\newtheorem{theorem}{Theorem}[section]
\newtheorem{lemma}[theorem]{Lemma}
\newtheorem{corollary}[theorem]{Corollary}
\newtheorem{definition}[theorem]{Definition}
\theoremstyle{definition}
\newtheorem{remark}[theorem]{Remark}
\numberwithin{equation}{section}
\newcommand{\grc}{\mathcal{G}} 
\newcommand{\tp}{\mathcal{P}} 
\newcommand{\tc}{\mathcal{W}} 
\begin{document}

\title[Translating solitons]{Some results on translating solitons of the mean curvature flow}

\author[J. P\'erez-Garc\'ia]{\textsc{J. P\'erez-Garc\'ia}}

\address{Jes\'us P\'erez-Garc\'ia\newline
Departamento de Geometr\'ia y Topolog\'ia\newline
Universidad de Granada\newline
Granada, Spain\newline
{\bf jpgarcia@ugr.es}
}

\date{}
\subjclass[2010]{Primary 53C44, 53A10}
\keywords{Mean curvature flow, translating solitons, tangency principle, maximum principle}
\thanks{J. P\'erez-Garc\'ia is supported by Ministerio de Econom\'ia y Competitividad (FPI grant, BES-2012-055302) and by MICINN-FEDER grant no. MTM2014-52368.}

\begin{abstract}
In this article we prove two non-existence results for translating solitons of the mean curvature flow (\emph{translators} for short) in $\real{m+1}$. We also obtain an upper bound to the maximum height that a compact embedded translator in $\real{3}$ can achieve. On the other hand, we study graphical perturbations of translators, showing that asymptotic graphical perturbations of a graph translator of revolution remain a hypersurface of revolution. Finally, we prove that compact translators that lie between two parallel planes inherit the symmetries of their boundaries curves.
\end{abstract}

\maketitle
\section{Introduction and notation}\label{sec:introduction}
An oriented smooth hypersurface $f:M^m\to\real{m+1}$ is called \emph{translating soliton} of the mean curvature flow (\emph{translator} for short) if its mean curvature vector field $\H$ satisfies the differential equation
\begin{equation*}
\H=\v^\perp,
\end{equation*}
where $\v\in\real{m+1}$ is a fixed unit vector and $\v^\perp$ stands for the orthogonal projection of $\v$ to the normal bundle of the immersion $f$. Translators are important in the singularity theory of the mean curvature flow since they often occur as Type-II singularities. The classic examples of translators (see figure \ref{fig:examples}) are
\begin{itemize}
\item Any hyperplane containing the direction of translation $\v$;
\item The \emph{canonical grim reaper cylinder} $\grc$, which is the product of the grim reaper curve and $\real{m-1}$. A parametrization of the grim reaper curve is given by 
$$\gamma: (-\pi/2,\pi/2)\to \real{2},\quad \gamma(t)=(t,-\log \cos t);$$
\item The \emph{translating paraboloid} or \emph{bowl solution} $\tp$, which is an entire rotationally symmetric strictly convex graphical translator \cite[Lemma 2.2]{clutterbuck2007};
\item A \emph{translating catenoid} or \emph{winglike translator} $\tc=\tc_R$, where $R>0$, which is a non-convex rotationally symmetric graphical translator \cite[Lemma 2.3]{clutterbuck2007}.
\end{itemize}

\begin{figure}[h]
\begin{subfigure}{.45\textwidth}
  \centering
  \includegraphics[width=.45\linewidth]{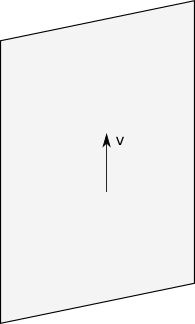}
  \caption{A plane tangential to $\v$}
  \label{fig:sfig1}
\end{subfigure}%
\begin{subfigure}{.45\textwidth}
  \centering
  \includegraphics[width=.63\linewidth]{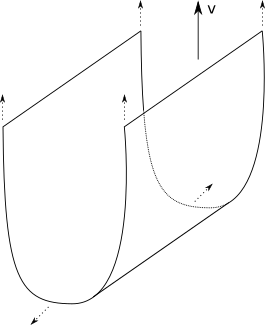}
  \caption{A grim reaper cylinder}
  \label{fig:sfig2}
\end{subfigure}
\par\bigskip 
\begin{subfigure}{1.0\textwidth}
  \centering
  \includegraphics[width=.8\linewidth]{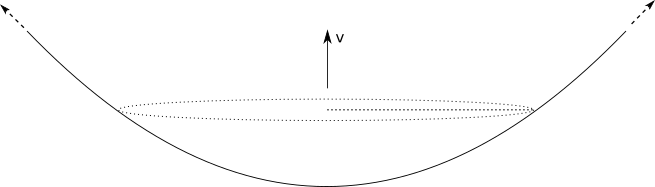}
  \caption{A translating paraboloid or bowl soliton}
  \label{fig:sfig3}
\end{subfigure}
\par\bigskip 
\begin{subfigure}{1.0\textwidth}
  \centering
  \includegraphics[width=.8\linewidth]{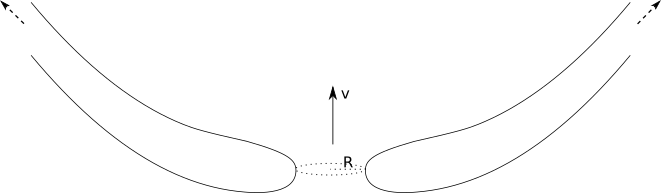}
  \caption{A translating catenoid or winglike translator}
  \label{fig:sfig4}
\end{subfigure}
\caption{Classic examples of translators}
\label{fig:examples}
\end{figure}

For more examples, we refer the reader to \cite[2.2 Examples]{fra14}. On the other hand, we will often discuss the hypothesis in our results using pieces of these examples, in which case the following notation will be very useful: for any $a\in \real{}$, we denote the corresponding closed upper and lower half-space in $\real{m+1}$, respectively, by
\begin{align*}
Z_a^+&=\{ (x_1,\ldots,x_{m+1})\in \real{m+1}: x_{m+1}\geq a \},\\
Z_a^-&=\{ (x_1,\ldots,x_{m+1})\in \real{m+1}: x_{m+1}\leq a \}.
\end{align*}

The aim of this paper is to use these classic examples of translators and the tangency principle (see section \ref{sec:non-existence}) to deduce interesting consequences on translating solitons of the mean curvature flow.

The structure of the paper is as follows. In section \ref{sec:non-existence}, we use the tangency principle to derive two non-existence results for translators. In section \ref{sec:height_estimate} we provide a height estimate for compact translators. In section \ref{sec:graphical_perturbations}, it is shown that a graphical perturbation of a graph translator of revolution $M$ which is asymptotic to $M$, remains a hypersurface of revolution. As an immediate consequence, we give an alternative proof of the uniqueness theorem for complete embedded translating solitons with a single end that are asymptotic to a transla\-ting parabo\-loid \cite[Theorem A]{fra14}. Finally, in section \ref{sec:symmetries}, using the Alexandrov's reflection principle we prove that if a compact translator lies between two parallel planes $P_1$ and $P_2$ which are orthogonal to $\v$, and its bounda\-ry consists of two strictly convex curves contained respectively in $P_1$ and $P_2$, then the translator inherits the symmetries of its boundary.

\section{Non-existence of translators}\label{sec:non-existence}
We begin with the statement of our main tool throughout this paper, the tangency principle.

\begin{theorem}[\textbf{Tangency principle}]\label{thm:tangency_principle}
Let $\Sigma_1$ and $\Sigma_2$ be embedded connected translators in $\real{m+1}$ with boundaries $\partial \Sigma_1$ and $\partial \Sigma_2$.
\begin{enumerate}
\item [\rm (a)] $(${\bf Interior principle}$)$ Suppose that there exists a common point $x$ in the interior of $\Sigma_1$ and $\Sigma_2$ where the corresponding tangent spaces coincide and such that $\Sigma_{1}$ lies at one side of $\Sigma_{2}$. Then $\Sigma_1$ coincides with $\Sigma_2$.
\smallskip
\item [\rm (b)] $(${\bf Boundary principle}$)$
Suppose that the boundaries $\partial \Sigma_1$ and $\partial \Sigma_2$ lie in the same hyperplane $\Pi$ and that the intersection of $\Sigma_{1}$, $\Sigma_{2}$ with $\Pi$ is transversal. Assume that $\Sigma_{1}$ lies at one side of $\Sigma_{2}$ and that there exists a common point of $\partial \Sigma_1$ and $\partial \Sigma_2$ where the surfaces $\Sigma_1$ and $\Sigma_2$ have the same tangent space. Then $\Sigma_1$ coincides with $\Sigma_2$.
\end{enumerate}
\end{theorem}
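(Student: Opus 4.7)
The plan is to reduce both (a) and (b) to the strong maximum principle and the Hopf boundary point lemma for a linear uniformly elliptic operator, exploiting the fact that the translator equation is a quasilinear elliptic PDE when the surface is written as a graph over its tangent plane.

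First, I would work locally near a common tangency point $x$. Choose Euclidean coordinates so that $x$ is the origin and the common tangent plane $T_x\Sigma_1 = T_x\Sigma_2$ is the hyperplane $\{y_{m+1}=0\}$. For a sufficiently small open disk $\Omega \subset \{y_{m+1}=0\}$, both $\Sigma_1$ and $\Sigma_2$ can be written as graphs of smooth functions $u_1, u_2 : \Omega \to \real{}$ with $u_i(0)=0$ and $\nabla u_i(0)=0$. The one-sided hypothesis, after possibly swapping the labels, becomes $u_1 \geq u_2$ on $\Omega$. Writing $\v=(\vec{w},\v_{m+1})\in\real{m}\times\real{}$, the equation $\H=\v^\perp$ takes the graphical form
\begin{equation*}
Q[u] \;:=\; \sum_{i,j=1}^{m}\!\left(\delta_{ij}-\frac{u_i u_j}{1+|\nabla u|^2}\right)\! u_{ij} \;=\; -\langle \vec{w},\nabla u\rangle + \v_{m+1},
\end{equation*}
which is a quasilinear uniformly elliptic equation on any relatively compact subset of $\Omega$ where $|\nabla u|$ stays bounded.

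Next, I would subtract the two equations for $u_1$ and $u_2$ and apply the standard linearization trick: by the fundamental theorem of calculus applied along the segment joining $(\nabla u_1(y), D^2 u_1(y))$ to $(\nabla u_2(y), D^2 u_2(y))$, the difference $w:=u_1-u_2 \geq 0$ satisfies a homogeneous linear equation
\begin{equation*}
L w \;:=\; \sum_{i,j} a_{ij}(y)\, w_{ij} + \sum_i b_i(y)\, w_i \;=\; 0,
\end{equation*}
with continuous coefficients and $(a_{ij})$ uniformly elliptic on a smaller neighborhood of $0$. At the origin we have $w(0)=0$ and $\nabla w(0)=0$, so the origin is an interior minimum of $w$.

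For part (a), the strong maximum principle of Hopf immediately gives $w\equiv 0$ on a neighborhood of $0$, so $\Sigma_1$ and $\Sigma_2$ agree on an open set. A standard open–closed argument then propagates this coincidence: the set of points of $\Sigma_1 \cap \Sigma_2$ at which the tangent planes coincide is open by the local argument just given, closed by continuity of the tangent plane, and nonempty by hypothesis, hence equal to $\Sigma_1$ by connectedness, so $\Sigma_1=\Sigma_2$. For part (b), the transversality hypothesis ensures that in suitable coordinates both surfaces can be written simultaneously as graphs over a half-disk $\Omega^+ \subset \Pi$ up to and including the common boundary piece, and the common boundary point corresponds to an interior point of the flat portion $\partial\Omega^+\cap \Pi$. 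Then $w\geq 0$, $w(0)=0$, and $\nabla w(0)=0$ at that boundary point, which by the Hopf boundary point lemma forces $w\equiv 0$ in a half-neighborhood; part (a) then extends the equality to all of $\Sigma_1$ and $\Sigma_2$.

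The main technical obstacle I expect is the boundary case: carefully setting up the coordinates so that the transversality of $\Sigma_i\cap \Pi$ indeed translates into the standard hypotheses of the Hopf lemma (i.e.\ a $C^2$ piece of boundary satisfying the interior sphere condition, with a common graphical domain for both $u_1$ and $u_2$). The interior linearization is routine, but verifying the graphical reduction up to the boundary, and checking that $\nabla w(0)=0$ truly holds there (which is where the coincidence of tangent spaces is used), requires some care with the geometry of the intersection $\Sigma_i\cap \Pi$.
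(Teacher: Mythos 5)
Your proof is correct, but it takes a genuinely different route from the paper. The paper does not prove the tangency principle directly: it invokes Ilmanen's observation that a translator in the direction $\v$ is a minimal hypersurface of $\big(\real{m+1}, e^{\frac{2}{m}\langle \cdot,\v\rangle}\delta\big)$, and then cites the (well-known) interior and boundary tangency principles for minimal hypersurfaces in a Riemannian manifold, referring to \cite{fra14} for the details. Your approach instead works directly with the Euclidean translator equation: you write both hypersurfaces as graphs over the common tangent plane, observe that the graphical translator equation is a quasilinear uniformly elliptic second-order PDE (your $Q[u]=-\langle\vec w,\nabla u\rangle+\v_{m+1}$ is the correct non-divergence form once the $\sqrt{1+|\nabla u|^2}$ factors are cancelled), linearize the difference $w=u_1-u_2$ along the segment joining the two jets to get a homogeneous linear equation $Lw=0$, and then apply the Hopf strong maximum principle at an interior touching point, or the Hopf boundary point lemma at a transversal boundary touching point, followed by the usual open--closed connectedness argument. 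This is precisely how the tangency principle for minimal hypersurfaces (the result the paper cites) is itself proved, so the two routes are ultimately equivalent; but yours is self-contained and avoids passing through the conformal metric, at the cost of redoing the graphical-reduction and linearization bookkeeping. You also correctly isolate the only genuinely delicate point, namely the boundary case: one must check that the transversality of $\Sigma_i\cap\Pi$ yields a common half-disk graphical representation up to the boundary, that the linearized operator is uniformly elliptic up to that boundary, and that the coincidence of tangent spaces at the touching boundary point forces $\nabla w=0$ there, which is exactly what makes the Hopf boundary point lemma's conclusion $\partial_\nu w<0$ contradictory unless $w\equiv 0$.
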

\smallskip
Roughly speaking, this maximum principle says that two different translators cannot ``touch'' each other at one interior or boundary point. Thanks to the fact that translating solitons are minimal hypersurfaces in a conformally changed Riemannian metric \cite{ilmanen}, the proof is based on the well-known tangency principle for minimal hypersurfaces. For more details, please see \cite[Theorem 2.1]{fra14}.

Let us prove now our first non-existence result about translators.

\smallskip
\begin{theorem}\label{thm:non_existence_I}
Let $f:M^{m} \to \real{m+1}$ be a non-compact embedded connected translator with compact boundary (possiby empty). Then $M$ cannot be contained in any cylinder.
\end{theorem}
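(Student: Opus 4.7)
My plan is to argue by contradiction with the interior tangency principle (Theorem~\ref{thm:tangency_principle}(a)), using the bowl soliton $\tp$ as a comparison translator. The key point is that $\tp$ is rotationally symmetric about an axis parallel to $\v$ and its cross-sections perpendicular to $\v$ are round disks whose radii tend to infinity; hence $\tp$ is itself not contained in any cylinder, and whenever the tangency principle forces $M$ to coincide locally with a translate of $\tp$ we immediately contradict $M \subset C$.

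Suppose, for contradiction, that $M \subset C$ where $C \subset \real{m+1}$ is a cylinder with axis direction $\mathbf{a}$ and bounded transverse cross-section. I would first treat the case $\mathbf{a} \perp \v$. In this situation $C$ lies in a slab orthogonal to $\v$, so the height coordinate $\langle\,\cdot\,,\v\rangle$ is bounded on $M$. Pick a translate $\tp_t := \tp - t\,\v$ whose tip is placed at a horizontal position far from the projection of the compact set $\partial M$, with $t$ so negative that the tip sits above $\sup_M\langle\,\cdot\,,\v\rangle$. Then $\tp_t \cap M = \varnothing$. Slide the bowl down by letting $t$ increase continuously and set
\[
t^{*} := \inf\{\,t \in \real{} : \tp_t \cap M \neq \varnothing\,\}.
\]
Since the horizontal cross-section of $\tp_t$ at any height in the slab containing $M$ eventually grows bigger than the horizontal extent of $M$, the infimum $t^{*}$ is finite. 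At $t = t^{*}$ the surfaces touch at a point $p^{*}$ with coinciding tangent planes and with $\tp_{t^{*}}$ locally on one side of $M$. Because the tip has been placed horizontally far from $\partial M$, the contact $p^{*}$ lies in the interior of $M$. Theorem~\ref{thm:tangency_principle}(a) then implies $M$ coincides locally with $\tp_{t^{*}}$, and by analyticity and the connectedness of $M$ we get $M = \tp_{t^{*}}$, contradicting $M \subset C$.

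When $\mathbf{a}$ is not orthogonal to $\v$, the coordinate $\langle\,\cdot\,,\v\rangle$ on $M$ may be unbounded, but $M$ is still bounded in every direction of $\mathbf{a}^{\perp}$. I would then adapt the scheme by translating $\tp$ along a direction in $\mathbf{a}^{\perp}$ (keeping the axis of $\tp$ parallel to $\v$) instead of vertically, and run the same first-touch argument. The main obstacle throughout is to verify, in each geometric configuration, both that the first contact $t^{*}$ truly exists and that the contact point is interior to $M$: the existence uses the geometric confinement imposed by $C$ against the unbounded horizontal spread of $\tp$, while the interiority is guaranteed by the compactness of $\partial M$ together with a judicious choice of the initial horizontal position of $\tp$.
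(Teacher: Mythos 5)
Your split into $\mathbf a\perp\v$ and $\mathbf a\not\perp\v$ does not align with the geometry, and the second case has a genuine gap: the bowl soliton cannot in general be made initially disjoint from $M$. Consider the extreme subcase $\mathbf a\parallel\v$ (a vertical cylinder). Here $M$ is bounded in all horizontal directions but may well be unbounded upward. For \emph{any} horizontal translate $\tp_a$ of the bowl, the horizontal cross-sections of $\tp_a$ at heights above its tip are spheres of unbounded radius, so there is a bounded height interval on which these spheres pass through the solid cylinder $C$; nothing prevents $M$ from meeting $\tp_a$ there. Thus the set $\{\,a: \tp_a\cap M=\varnothing\,\}$ may be empty, and the first-touch parameter $a^{*}$ you want to invoke may not exist. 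Sliding $\tp$ vertically fares no better: since $M\subset C$ forces the horizontal coordinates of $M$ to stay in a fixed bounded set while the bowl's height over that set is bounded, a translator $M$ that is unbounded upward cannot lie below any vertical translate of $\tp$. So the one-sided configuration the tangency principle needs is simply unavailable in this case. (This is exactly why the paper replaces $\tp$ by a winglike translator $\tc_R$ for a cylinder with axis parallel to $\v$: $\tc_R$ with $R>r_0$ genuinely surrounds $C$ and is disjoint from it, and shrinking $R$ produces a bona fide first contact.) The tilted-but-not-perpendicular subcase has a similar difficulty, which the paper sidesteps by comparing with a grim reaper cylinder placed in a vertical slab $S$ chosen so that $\overline S\cap M$ is compact and misses $\partial M$.

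Your Case $\mathbf a\perp\v$ does appear sound and is a reasonable alternative to the paper's grim-reaper argument there: the height of $M$ is bounded by the cylinder radius, the bowl can be started strictly above $M$, the minimizing parameter $t^{*}$ exists because $f(x_1-L,\ldots)-x_{m+1}\to\infty$ along $M$, and placing the bowl axis far from the compact set $\partial M$ forces the first contact into the interior. But this only covers part of the theorem. To complete the proof you should either switch to the winglike translator when the axis is parallel (or nearly parallel) to $\v$, or adopt the paper's dichotomy (axis parallel to $\v$ versus axis tilted) and use the compactness of $\overline S\cap M$ in a suitable slab for the tilted case.
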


\begin{proof}
We argue by contradiction. Suppose that $M\equiv f(M^{m})$ is contained in a cylinder $\mathcal{C}_{r_0}$. We distinguish two cases:

\textbf{Case 1}: The axis of $\mathcal{C}_{r_0}$ is parallel to the direction of translation $\v$.\par
Consider first a \emph{winglike} translator $\tc_{R_{0}}$ with center in the axis of $\mathcal{C}_{r_0}$ and with radius $R_{0}>r_{0}$, so that, in particular, $\tc_{R_{0}}\cap M=\emptyset$. Consider next the family of winglike translators $\{\tc_{R}\}_{0<R\leq R_{0}}$. Since $\tc_{R_{0}}\cap M=\emptyset$, there must be a $R_{1}\in (0, R_{0}]$ such that $\tc_{R_{1}}$ intersects $M$ for the first time. Without loss of generality, we can assume that this first point of contact is an interior point of both surfaces, otherwise it is at the boundary of $M$, in which case it is sufficient to consider the initial winglike translator $\tc_{R_{0}}$ located at a higher height (recall that the boundary of $M$ is compact by hypothesis). Therefore, by the interior tangency principle, $M\subset \tc_{R_{1}}$, which contradicts that $M$ is a non-compact surface contained in $\mathcal{C}_{r_0}$.

\textbf{Case 2}: The axis of $\mathcal{C}_{r_0}$ is not parallel to $\v$.\par
In this case the argument is similar but comparing with a grim reaper cylinder. Let us see it in detail. Due to the compactness of the bounda\-ry and the non-compactness of the translator, there exists a real number $a$ such that $\overline{S}\cap \partial M = \emptyset$, where $S:=(-\pi+a,\pi+a)\times \real{m}$. Let $\hat{\grc}$ be the canonical grim reaper cylinder located in this slab $S$ at a large height so that it does not intersect $M$. Then translate it down until it ``touches'' $M$ for the first time. Observe that this procedure is feasible because $\overline{S}\cap M$ is compact, since by hypothesis the cylinder is tilted. Moreover, as $\overline{S}\cap \partial M = \emptyset$, this point of contact must be in the interior of $M$. Hence, by the interior tangency principle, $M\subset \hat{\grc}$, a contradiction.
\end{proof}
\medskip

\begin{remark}
Let us make here some remarks concerning the previous Theorem \ref{thm:non_existence_I}.
\begin{enumerate}[a)]
\item The result is not true if the translator (with boundary) is compact. A counterexample is the piece of translating paraboloid $\tp$ obtained by cutting this surface with a horizontal plane at any arbitrary but fixed height $a>0$ and considering the lower part, that is, $\tp\cap Z_a^-$.\\
\item The compactness of the boundary is also necessary. A counterexample is the intersection of the canonical grim reaper cylinder $\grc$ with a cylinder of arbitrary but fixed radius $R>0$ and axis the $x_2$-axis; this surface is contained, for instance, in the cylinder of radius $2R$ and axis the $x_2$-axis.
\end{enumerate}
\end{remark}
\medskip

In the following result we prove that there are no translators \emph{that resemble a handle} (see figure \ref{fig:Thm2}). More precisely,

\begin{figure}[h]
\centering
        \includegraphics[width=0.75\linewidth]{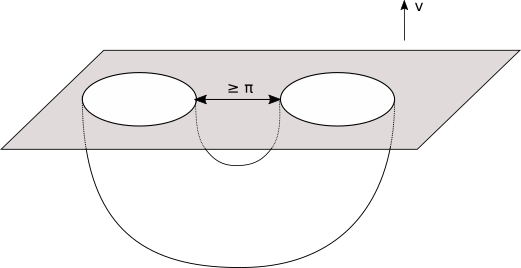}
    \caption{A surface under the conditions of Theorem \ref{thm:non_existence_II}}
    \label{fig:Thm2}
\end{figure}

\begin{theorem}\label{thm:non_existence_II}
There do not exist a connected compact embedded translator in $\real{m+1}$ whose boundary is contained in a hyperplane orthogonal to the direction of translation $\v$ and consists of two strictly convex Jordan curves located at distance greater or equal than $\pi$ and such that one of them is not contained in the region enclosed by the other one.
\end{theorem}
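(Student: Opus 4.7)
The plan is to place a translated canonical grim reaper cylinder in the gap between the two boundary curves and slide it down until it touches $M$; the interior tangency principle will then yield a contradiction. The key geometric fact is that the canonical grim reaper cylinder has width exactly $\pi$, matching the separation hypothesis $d(C_1,C_2)\ge\pi$.

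After an isometry of $\real{m+1}$, I may assume $\v=e_{m+1}$ and that the hyperplane containing $\partial M=C_1\cup C_2$ is $\Pi=\{x_{m+1}=0\}$. Because $C_1,C_2$ are strictly convex Jordan curves and one of them is not contained in the region enclosed by the other, the two compact convex regions they bound in $\Pi$ are disjoint. Combined with $d(C_1,C_2)\ge\pi$, standard separation of disjoint compact convex sets (applied along the segment realizing the distance) gives, after a further rotation in $\Pi$, a real number $a$ such that $C_1\subset\{x_1\le a\}$ and $C_2\subset\{x_1\ge a+\pi\}$. Write $S=\{a<x_1<a+\pi\}\subset\real{m+1}$ for the open vertical slab between them; since $M$ is connected with boundary components on opposite sides of $S$, necessarily $M\cap S\ne\emptyset$.

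Next I would introduce, for $t\in\real{}$, the translated grim reaper cylinders
\[
\hat{\grc}_t=\bigl\{(x_1,\ldots,x_{m+1})\in\real{m+1}:\ a<x_1<a+\pi,\ x_{m+1}=-\log\cos(x_1-a-\pi/2)+t\bigr\},
\]
each a complete translator entirely contained in $S$. For $t$ sufficiently large $\hat{\grc}_t$ lies strictly above the compact set $M$, and as $t$ decreases the first-contact value is
\[
t_0=\sup_{p\in M\cap S}\bigl(x_{m+1}(p)+\log\cos(x_1(p)-a-\pi/2)\bigr).
\]
Since $x_{m+1}$ is bounded on the compact set $M$ while $\log\cos(x_1-a-\pi/2)$ diverges to $-\infty$ as $x_1\to\partial S$, the supremum is attained at some point $p_0$ in the open slab $S$, at which $M$ is tangent to $\hat{\grc}_{t_0}$ from below. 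Because $\partial M$ is disjoint from $S$, the point $p_0$ is interior to $M$, and it is clearly interior to the boundaryless $\hat{\grc}_{t_0}$. The interior tangency principle (Theorem~\ref{thm:tangency_principle}(a)) then forces $M\subset\hat{\grc}_{t_0}\subset\{a<x_1<a+\pi\}$, contradicting $C_1\subset M\cap\{x_1\le a\}$.

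The main technical subtlety I expect is verifying that the first contact occurs at an interior point of $M$ rather than on $\partial M$; this is precisely where the quantitative condition $d(C_1,C_2)\ge\pi$ enters, since matching the width of the canonical grim reaper forces the contact point into the open slab $S$, safely away from $\partial M$.
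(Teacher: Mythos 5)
Your proof is correct and follows essentially the same strategy as the paper's: separate the two boundary curves by an open slab of width $\pi$, place a translated canonical grim reaper cylinder inside that slab, slide it down to first contact, and invoke the interior tangency principle. The one genuine streamlining is that you do not need the paper's preliminary observation (via the maximum principle for the height function $u = \langle f, \v\rangle$) that $M$ lies below the boundary hyperplane: your explicit supremum
\[
t_0=\sup_{p\in M\cap S}\bigl(x_{m+1}(p)+\log\cos(x_1(p)-a-\pi/2)\bigr)
\]
together with the facts that $x_{m+1}$ is bounded on the compact $M$ and $\log\cos$ blows down to $-\infty$ at $\partial S$ directly yields both that a disjoint starting position exists and that the first contact occurs at an interior point of $M$. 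You are also somewhat more careful than the paper about the positioning of the slab, using separation of the disjoint convex hulls of $C_1$ and $C_2$; the paper achieves the same thing by taking the perpendicular hyperplanes at the endpoints of the segment realizing the distance. These are presentational improvements rather than a different route.
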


\begin{proof}
We will denote by $f:M\to \real{m+1}$ to an embedding of $M$, and by $P$ to the hyperplane that contained the boundary of $M\equiv f(M)$.

First, note that $M$ must be below the plane $P$. Otherwise, by compactness of $M$, the height function of $M$, $u:=\langle f,\v \rangle$, would attain a global maximum. But recall \cite[Lemma 2.1 (d)]{fra14} that this height function satisfies the equation $\Delta u +|\nabla u|^2=1$, so $u$ does not admit any local maxima in the interior, a contradiction.

Now consider the segment $s$ realizing the distance between the two boundary curves of $M$. The length of this segment is greater or equal than $\pi$ by hypothesis. Let $l$ be the straight line in the direction of $\v$ passing through the middle point of the segment $s$. Place a canonical grim reaper cylinder $\hat{\grc}$ in such a way that its lower generatix coincides with $l$. Observe that $\hat{\grc}$ is strictly contained in a slab $S$ defined as the cartesian product of the segment $s$ times the line $l$. Initially $\hat{\grc}$ does not intersect $M$ because $M$ is below the hyperplane $P$. Then translate $\hat{\grc}$ down following the direction of translation $\v$ until it intersects $M$ for the first time, which necessarily occurs in an interior point of $M$ because any of these translations of $\hat{\grc}$ is strictly contained in the slab $S$. Then, by the interior tangency principle, $M\subset \hat{\grc}$, which is absurd.
\end{proof}
\medskip

\begin{remark}\leavevmode
\begin{enumerate}[a)]
\item In Theorem \ref{thm:non_existence_II}, it is necessary that the boundary curves lie in the same hyperplane. Otherwise the result is not true, as the following example shows: the piece of the translating paraboloid which is between two horizontal hyperplanes: $Z_{a}^{+}\cap \tp\cap Z_{b}^{-}$, where $0<a<b$.\\
\item Moreover, if it is allowed that one of the boundary curves is in the region enclosed by the other one, then there exist translators under the rest of the hypothesis of the theorem \ref{thm:non_existence_II}. For instance, the intersection of a winglike translator with a lower half-space, $\mathcal{W_{R}}\cap Z_a^-$, where, obviously, $a$ is large enough so that this intersection is non-empty.
\end{enumerate}
\end{remark}

\section{A height estimate}\label{sec:height_estimate}
Our aim in this section is to develop a geometric argument for obtaining an upper bound to the maximum height that a compact embedded translator in $\real{3}$ can achieve.

\begin{theorem}\label{thm:height_estimate}
Let $M\subset \real{3}$ be a connected compact embedded translator whose boundary is a connected curve $\Gamma$ contained in a plane $P$ orthogonal to $\v$. Assume that the diameter of $\Gamma$ is $d>0$. Then, for all $p\in M$, the distance in $\real{3}$ from $p$ to $P$ is less or equal than
$$
\begin{cases} 
      -\log \cos \left( \frac{d}{2} \right) & 0<d<\pi \\
      \displaystyle \min_{1<s\leq s_0} C(s)   & d\geq \pi
\end{cases}
$$
where $C:(1,+\infty)\to (0,+\infty)$ is the function given by
$$C(s):=-\left( \frac{d}{\pi}s \right)^2 \log \cos \left( \frac{\pi/2}{s} \right)+\frac{d}{2}\sqrt{\left( \frac{d}{\pi}s \right)^2-1},$$
and 
$$s_0:=\frac{\pi}{2}\frac{1}{\arctan \left( \frac{4-\sqrt{2}}{2} \right)}\approx 1.722.$$
\end{theorem}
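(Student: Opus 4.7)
The plan is to combine the maximum principle for the height function $u := \langle f, \mathbf{v}\rangle$ with repeated applications of the tangency principle (Theorem \ref{thm:tangency_principle}), using progressively more elaborate translator barriers. The equation $\Delta_M u + |\nabla_M u|^2 = 1$ of \cite[Lemma 2.1 (d)]{fra14} forbids interior maxima of $u$, so compactness forces $M \subset Z_0^-$. Next, since every vertical plane is a translator, I would slide such a plane (with horizontal normal) from infinity toward $M$; the interior principle then rules out interior first contact --- otherwise $M$ would be contained in a plane and $\Gamma$ in a line --- so first contact occurs on $\Gamma$. Doing this in every direction confines the orthogonal projection of $M$ to $P$ inside the convex hull of $\Gamma$. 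After placing the diameter-realizing chord along the $x$-axis between $(\pm d/2, 0, 0)$, this gives $M \subset \{|x| \leq d/2\} \cap \{|y| \leq d/2\}$.

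For $0 < d < \pi$ I would use the canonical grim reaper cylinder $G_c := \{z = -\log \cos x + c\}$, with rulings in the $y$-direction. Since $d/2 < \pi/2$, the $x$-projection of $M$ lies strictly inside the $x$-domain $(-\pi/2, \pi/2)$ of $G_c$, and the function $\phi(p) := z(p) + \log \cos(x_p)$ attains its minimum $c^*$ on the compact $M$ at some point $q$. An interior minimizer would force $M \subset G_{c^*}$ and hence $\Gamma \subset G_{c^*} \cap P = \{x = \pm \arccos(e^{c^*})\}$, impossible for a connected closed curve; so $q \in \Gamma$ and $c^* = \log \cos(x_q) \geq \log \cos(d/2)$. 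Since $M$ lies above $G_{c^*}$, for every $p \in M$, $z(p) \geq -\log \cos(x_p) + c^* \geq c^* \geq \log \cos(d/2)$, so the distance from $p$ to $P$ is at most $-\log \cos(d/2)$.

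For $d \geq \pi$ the canonical grim reaper no longer covers $M$, so I would replace it by the tilted grim reapers
\begin{equation*}
G_c^{\pm} := \{z = -r^2 \log \cos(x/r) \pm \alpha y + c\}, \quad r := ds/\pi,\ \alpha := \sqrt{r^2-1},
\end{equation*}
for each $s \in (1, s_0]$; a direct computation shows these are translators, and their $x$-domain $(-r\pi/2, r\pi/2) = (-ds/2, ds/2)$ strictly contains $[-d/2, d/2]$. Running the same sliding argument independently on $G_c^+$ and $G_c^-$ (an interior first contact is again ruled out, since $G_c^\pm \cap P$ is a graph over $x$ and no closed curve is a graph) produces first-contact values $c_*^\pm$ attained at points $q^\pm \in \Gamma$ with $|x_{q^\pm}|, |y_{q^\pm}| \leq d/2$, hence $c_*^\pm \geq r^2 \log \cos(d/(2r)) - \alpha d/2$. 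Summing the two lower barriers $z(p) \geq -r^2 \log \cos(x_p/r) \pm \alpha y_p + c_*^\pm$ cancels the asymmetric tilt term and yields
\begin{equation*}
-z(p) \leq -r^2 \log \cos(d/(2r)) + \tfrac{d}{2}\sqrt{r^2-1} = C(s)
\end{equation*}
for every $p \in M$; optimizing over $s$ gives the stated bound.

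The principal obstacle is the $d \geq \pi$ step: verifying that $G_c^{\pm}$ are genuine translators, adapting the tangency-principle argument to this asymmetric barrier (especially ruling out interior first contact and obtaining the quantitative lower bound on $c_*^\pm$), and executing the symmetric averaging of $G_c^+$ and $G_c^-$ that produces the factor $d/2$ rather than $d$ in the linear term of $C(s)$. It then remains to identify the explicit endpoint $s_0$, determined by $\tan(\pi/(2 s_0)) = (4 - \sqrt{2})/2$, as the precise geometric condition under which this tilted-barrier optimization is valid.
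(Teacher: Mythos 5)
Your strategy coincides with the paper's: both reduce to comparison with dilated, tilted grim reaper cylinders, and your $G_c^+$ is exactly the paper's barrier $R_x(\alpha)(\lambda\grc)$ written as a graph over the slab $\{|x|<\lambda\pi/2\}$, with $r=\lambda$ and $\alpha=\sqrt{\lambda^2-1}$. The $0<d<\pi$ case is essentially identical. For $d\geq\pi$ you take a slightly different route to the number $C(s)$: the paper computes the extremal heights on $\grc_{\lambda,\alpha}\cap\mathcal{C}$ (the cylinder of radius $d/2$), whereas you average the two opposite tilts $G_c^{\pm}$ to cancel the $\pm\alpha y$ term. That averaging works, but your intermediate claim $M\subset\{|y|\leq d/2\}$ is false: placing a diameter chord of $\Gamma$ along the $x$-axis forces $|x|\leq d/2$ on $\Gamma$ and hence on the projection of $M$, but a set of diameter $d$ (a Reuleaux triangle, say) need not lie in $\{|y|\leq d/2\}$. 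The correct and sufficient fact is that the $y$-extent of $\Gamma$ is at most $d$, so that $c_*^{+}+c_*^{-}\geq 2r^2\log\cos\bigl(\tfrac{d}{2r}\bigr)-\alpha\bigl(\max_\Gamma y-\min_\Gamma y\bigr)\geq 2r^2\log\cos\bigl(\tfrac{d}{2r}\bigr)-\alpha d$; the per-barrier bound $c_*^{\pm}\geq r^2\log\cos(d/(2r))-\alpha d/2$ you wrote need not hold individually.

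The real gap is the one you flag at the end, and it is mischaracterized: $s_0$ is not a ``geometric condition under which the tilted-barrier optimization is valid'' --- the barrier argument yields the estimate $C(s)$ for \emph{every} $s>1$. The role of $s_0$ is purely analytic, namely to turn $\inf_{s>1}C(s)$ into the compact minimum $\min_{1<s\leq s_0}C(s)$ appearing in the statement. The paper establishes this by noting $C(s)\to+\infty$ as $s\to1^+$ and as $s\to+\infty$, and then bounding $C'(s)$ from below in two stages: dropping the positive logarithmic term and using $\sqrt{(ds/\pi)^2-1}\leq ds/\pi$ gives $C'(s)>\tfrac{d^2}{2\pi}\bigl(1-\tan(\pi/(2s))\bigr)\geq 0$ for $s\geq2$; then on $(1,2]$, using $-\log\cos t\geq 1-\cos t$ and $\cos(\pi/(2s))\leq\cos(\pi/4)=\sqrt2/2$ gives $C'(s)>\tfrac{d^2}{2\pi}\bigl(2-\tfrac{\sqrt2}{2}-\tan(\pi/(2s))\bigr)\geq 0$ precisely for $s\geq s_0$. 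Thus $C$ is increasing on $[s_0,\infty)$, which is what justifies restricting the minimization to $(1,s_0]$. Your proposal omits this step entirely, so it does not yet produce the theorem in its stated form.
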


\begin{proof}
The idea is to compare $M$ with an appropiate grim reaper cylinder.

First suppose that $0<d<\pi$. Without loss of generality, assume that the diameter of lenght $d$ coincides with
$$\{(x,y,z)\in \real{3}: -d/2\leq x \leq d/2, y=0, z=z_0 \},$$
for an arbitrary but fixed $z_0\in \real{}$. Consider a canonical grim reaper cylinder $\grc$ and observe that, since $d<\pi$, the region between the two parallel planes asymptotic to $\grc$ contains $\Gamma$. Hence, this grim reaper cylinder can be translated down until it does not intersect $M$. Now traslate it up until their first point of contact occurs. By the tangency principle, this must happen at a boundary point of $M$. Observe also that for any point $(x_0,y_0,-\log \cos x_0)$ of the grim reaper cylinder, the width between its two ``wings'' is precisely $2x_0$, so the width is $d$ when the height is $-\log \cos \left( \frac{d}{2} \right)$. In conclusion, this argument shows that $M$ must be contained in the compact region enclosed by the intersection of the horizontal plane $P$ and a canonical grim reaper cylinder whose lowest point is at distance $-\log \cos \left( \frac{d}{2} \right)$ from $P$, which proves the boundedness if $0<d<\pi$.\par

Second, suppose that $d\geq \pi$. In this case a canonical grim reaper cylinder $\grc$ cannot contained $\Gamma$. It is necessary a dilation of factor $\lambda>\frac{d}{\pi}>1$. But then the velocity changes, which does not allow us to use the tangency principle anymore. To overcome this problem, a rotation that maps again the new velocity to $\v$ can be applied to $\lambda\grc$. That is, with an appropiate dilation and rotation we can proceed similarly as above.\\
In detail, consider the canonical grim reaper cylinder
$$\grc=\{(x,y,-\log \cos x): (x,y)\in (-\pi/2,\pi/2)\times \real{} \}.$$
Apply to $\grc$ a dilation of factor $\lambda >1$ such that $\lambda \pi>d$,
$$\lambda\grc=\{\lambda(x,y,-\log \cos x): (x,y)\in (-\pi/2,\pi/2)\times \real{}\},$$
so that $\Gamma$ fits in the slab determined by the dilated grim reaper cylinder $\lambda\grc$. Observe that there are infinite factors of dilation with this property. A way to parametrize them is to consider $\lambda(s):= \frac{d}{\pi}s$, where $s>1$. For brevity, we will usually omit the parameter $s$.\\
Note that with this dilation the translating velocity changes from $\v=(0,0,1)$ to $(1/\lambda)\v=(0,0,1/\lambda)$. A unitary velocity can be achieved noticing that the grim reaper cylinder $\lambda\grc$ is invariant under translations in the directions of $e_2$. Hence, $\lambda\grc$ can be considered as a translator in the direction of $(1/\lambda)\v+ae_2=(0,a,1/\lambda)$, for any $a\in \real{}$. In particular, for $a_0=\sqrt{1-(1/\lambda)^2}$ we have that $\tilde{\v}:=(0,\sqrt{1-(1/\lambda)^2},1/\lambda)$ is a unit vector. Finally, a rotation around the $x$-axis is performanced in order to transform $\tilde{\v}$ into $\v$, so that $M$ and this dilated and tilted grim reaper cylinder can be compared each other, that is, so that they have the same tranlating velocity vector. Specifically, the angle $\alpha$ of rotation must be
$$\frac{1}{\lambda}=\langle \v, \tilde{\v} \rangle= |\v||\tilde{\v}|\cos \alpha = \cos \alpha \Rightarrow \alpha=\arccos \left( \frac{1}{\lambda} \right).$$
Hence, the rotation matrix $R_{x}(\alpha)$ is
$$
R_{x}(\alpha):=
\begin{pmatrix} 
1 & 0 & 0 \\ 
0 & \cos \alpha & -\sin \alpha \\ 
0 & \sin \alpha & \cos \alpha  
\end{pmatrix}=
\begin{pmatrix} 
1 & 0 & 0 \\ 
0 & 1/\lambda & -a_0 \\ 
0 & a_0 & 1/\lambda  
\end{pmatrix}
$$

Therefore, the surface that will be used to compare with $M$ is
\begin{equation}\label{eq:tilted_grim_plane}
\begin{aligned}
R_{x}(\alpha)(\lambda\grc)=&\left\lbrace \left(\lambda x,y+\sqrt{\lambda^2-1}\log \cos x,\sqrt{\lambda^2-1}y-\log \cos x \right):\right.\\
&\quad \left. (x,y)\in (-\pi/2,\pi/2)\times \real{}\right\rbrace.
\end{aligned}
\end{equation}

For brevity, we will denote $R_{x}(\alpha)(\lambda\grc)$ by $\grc_{\lambda,\alpha}$.\\
Now the idea is to translate $\grc_{\lambda,\alpha}$ until it does not intersect $M$ and translate it back until they intersect each other for the first time. By the tangency principle, the first point of contact must be at the boundary of $M$. To make the computations it is convenient to consider the following static situation, which is equivalent: to determine the intersection of $\grc_{\lambda,\alpha}$ with the cylinder $\mathcal{C}$ of diameter $d$,
\begin{equation}\label{eq:cylinder}
\mathcal{C}=\mathcal{C}_{d/2}:=\left\lbrace (x,y,z)\in \real{3}: x^2+y^2= \left(\frac{d}{2}\right) ^2 \right\rbrace,
\end{equation}
and compute the global minimum and maximun of the third coordinate function of the parametrization of this intersection.\\
Combining \eqref{eq:tilted_grim_plane} and \eqref{eq:cylinder}, we obtain that a parametrization of the intersection of $\grc_{\lambda,\alpha}$ and $\mathcal{C}$ is $\gamma_{\pm}:\left[ -\frac{d/2}{\lambda}, \frac{d/2}{\lambda}\right] \to \real{3}$ given by
\begin{align*}
\gamma_{\pm}(x):=\bigg( & \lambda x, \pm \sqrt{\left( d/2 \right)^2-\left( \lambda x \right)^2},\\
& -\lambda^2 \log \cos x \pm \sqrt{\lambda^2-1}\sqrt{\left( d/2 \right)^2-\left( \lambda x \right)^2}  \bigg).
\end{align*} 
The critical points of $\gamma_{\pm}$ correspond to $x=0$:
$$\left( 0,-\frac{d}{2}, -\frac{d}{2}\sqrt{\lambda^2-1} \right)\text{,} \enskip \left( 0,\frac{d}{2}, \frac{d}{2}\sqrt{\lambda^2-1} \right).$$
The points on the boundary of $\grc_{\lambda,\alpha}\cap \mathcal{C}$ are
$$\left( -\frac{d}{2}, 0, -\lambda^2 \log
 \cos \frac{d/2}{\lambda} \right)\text{,} \enskip \left( \frac{d}{2}, 0, -\lambda^2 \log \cos \frac{d/2}{\lambda} \right).$$
Therefore, the global maximum and minimum of the third coordinate function of $\gamma_{\pm}$ are $-\lambda^2 \log \cos \frac{d/2}{\lambda}$ and $-\frac{d}{2}\sqrt{\lambda^2-1}$, respectively. Hence, the boundedness is given in this case by their difference, which is precisely $C(s)$, as claimed.

Now observe that the function $C(s)$ is positive and 
$$\lim_{s\to 1^{+}} C(s)=\lim_{s\to +\infty} C(s)=+\infty,$$
hence $C(s)$ has a global minimum. The problem is that it cannot be computed analytically. Indeed, the critical points of $C(s)$ are the zeros of
$$C'(s)=-2\frac{d^2}{\pi^2}s \log \cos \left( \frac{\pi/2}{s} \right)-\frac{d^2}{2\pi}\tan \left( \frac{\pi/2}{s} \right) +\frac{d^3}{2\pi^2}\frac{s}{\sqrt{\left( \frac{d}{\pi}s \right)^2-1}}.$$
Nevertheless, we can determine an $s_0>1$ such that $C(s)$ is increasing in $(s_0,+\infty)$. Specifically, for all $s>1$,
\begin{align*}
C'(s)&=-2\frac{d^2}{\pi^2}s \log \cos \left( \frac{\pi/2}{s} \right)-\frac{d^2}{2\pi}\tan \left( \frac{\pi/2}{s} \right) +\frac{d^3}{2\pi^2}\frac{s}{\sqrt{\left( \frac{d}{\pi}s \right)^2-1}}\\
&> -\frac{d^2}{2\pi}\tan \left( \frac{\pi/2}{s} \right) +\frac{d^3}{2\pi^2}\frac{s}{\sqrt{\left( \frac{d}{\pi}s \right)^2-1}}\\
&= \frac{d^2}{2\pi}\left(-\tan \left( \frac{\pi/2}{s} \right) +\frac{d}{\pi}\frac{s}{\sqrt{\left( \frac{d}{\pi}s \right)^2-1}}\right)\\
&\geq \frac{d^2}{2\pi}\left(-\tan \left( \frac{\pi/2}{s} \right) +\frac{d}{\pi}\frac{s}{\sqrt{\left( \frac{d}{\pi}s \right)^2}}\right)\\
&= \frac{d^2}{2\pi}\left(-\tan \left( \frac{\pi/2}{s} \right) +1\right).
\end{align*}
Since
$$-\tan \left( \frac{\pi/2}{s} \right) +1\geq 0 \Leftrightarrow s\geq 2,$$
then
$$\min_{s\in (1,+\infty)} C(s)=\min_{1<s\leq 2} C(s).$$

Once we know that $C$ is increasing for $s>2$, we can easily improve the above lower bound of $C'$:
\begin{align*}
C'(s)&= \frac{d^2}{2\pi}\left(\frac{4}{\pi}s \left(-\log \cos \left( \frac{\pi/2}{s} \right)\right) -\tan \left( \frac{\pi/2}{s} \right) +\frac{d}{\pi}\frac{s}{\sqrt{\left( \frac{d}{\pi}s \right)^2-1}}\right)\\
&> \frac{d^2}{2\pi}\left(1\left(1-\cos \left( \frac{\pi/2}{s} \right) \right) -\tan \left( \frac{\pi/2}{s} \right) +1\right)\\
&= \frac{d^2}{2\pi}\left(2-\cos \left( \frac{\pi/2}{s} \right)-\tan \left( \frac{\pi/2}{s} \right) \right).
\end{align*}

Now, observe that
$$2-\cos \left( \frac{\pi/2}{s} \right)-\tan \left( \frac{\pi/2}{s} \right)\geq 0 \Leftrightarrow \cos \left( \frac{\pi/2}{s} \right)+\tan \left( \frac{\pi/2}{s}\right) \leq 2,$$

and taking into account that, due to our previous computations, we can restrict our estimation of $C'$ to the interval $(1,2]$, we have that it is sufficient to find an $s$ such that

$$\cos \left( \frac{\pi/2}{s} \right)+\tan \left( \frac{\pi/2}{s}\right)\leq \cos \left( \frac{\pi}{4} \right)+\tan \left( \frac{\pi/2}{s}\right) \leq 2,$$

that is,

$$ \frac{\sqrt{2}}{2} +\tan \left( \frac{\pi/2}{s}\right) \leq 2 \Leftrightarrow s\geq \frac{\pi}{2}\frac{1}{\arctan \left( \frac{4-\sqrt{2}}{2} \right)},$$

and the proof is complete.

\end{proof}
\medskip

\begin{remark}
The height estimate is valid in a more general setting: in the statement of the Theorem \ref{thm:height_estimate}, instead of considering the diameter $d$ of $\Gamma$, we can assume that the curve $\Gamma$ is strictly contained in a slab of width $d>0$, and the proof remains exactly the same.
\end{remark}

\section{Graphical perturbations of translators}\label{sec:graphical_perturbations}
\begin{definition}[Graphical perturbation]\label{def:graph_perturbation}
Let $N$ be a connected graph hypersurface given by $u:U\subset \real{m}\to \real{m+1}$. Let $M$ be a hypersurface in $\real{m+1}$.\\
We say that $M$ is a graphical perturbation of $N$ if there exists a function $\varphi:U\to\real{}$ such that $M$ can be represented as the graph of $u+\varphi$, that is,
$$M=\Graph{(u+\varphi)}.$$

We will say that $M$ is an asymptotic graphical perturbation of $N$ if $M$ is a graphical perturbation of $N$ and, moreover, for every sequence $\{x_i\}_{i=1}^{+\infty}$ in $U$ such that $\lim\limits_{i\to +\infty}u(x_i)=\infty$ it holds that $\lim\limits_{i\to +\infty}\varphi(x_i)=0$.

Finally, we will say that $M$ is a (an asymptotic) graphical perturbation of $N$ outside a compact set $\mathcal{K}\subset \real{m+1}$ if $M-\mathcal{K}$ is a (an asymptotic) graphical perturbation of $N-\mathcal{K}$.
\end{definition}
\smallskip

\begin{remark}\leavevmode
Note that if there exists a function $\varphi$ as in the previous definition, then it is smooth because it is the difference of two graph hypersurfaces, which are always assumed to be smooth in this paper.
\end{remark}

Roughly speaking, the asymptotic behaviour here means that, outside a bounded region in $M$, $M$ is arbitrarily close to $N$. An example is shown in figure \ref{fig:perturbation}.
\par\bigskip

\begin{figure}[h]
\centering
        \includegraphics[width=0.75\linewidth]{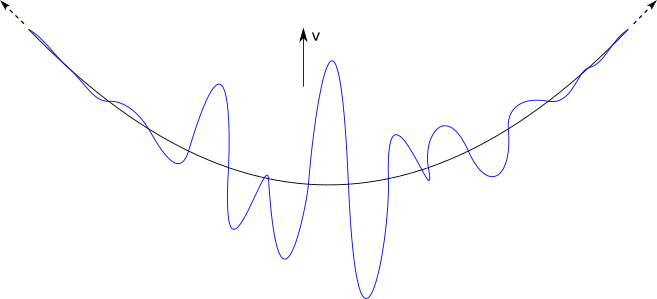}
    \caption{A profile view of an example of an asymptotic graphical perturbation of a translating paraboloid}
    \label{fig:perturbation}
\end{figure}

The goal of this section is to show that if we have two hypersurfaces that are graphically asymptotic outside a compact set, then there are some interesting common properties between them, such as being a graph hypersurface or a hypersurface of revolution. This is the content of our next theorem. Before stating it precisely, let us present the idea of the proof. Basically it is another consequence of the tangency principle, comparing the translator \emph{with a transformation of itself} according to the following scheme:

\begin{enumerate}[1)]
\item Consider $\hat{M}$ a ``copy'' of $M$;
\smallskip
\item Translate $\hat{M}$ up, $\hat{M}\mapsto \hat{M}+t_0v$ for some $t_0>0$, until it does not intersect $M$;
\smallskip
\item Apply an isometry $i:\real{m+1} \to \real{m+1}$ of the ambient space to $\hat{M}+t_0v$ so that $i(\hat{M}+t_0v)\cap M=\emptyset$;
\smallskip
\item Move $i(\hat{M}+t_0v)$ down until it ``touches'' $M$ for the first time.
\end{enumerate}

Then, by the interior tangency principle, $i(\hat{M})= M$. Hence, $M$ is invariant under the isometry $i$. 

We will follow this scheme in the next results.\par

For example,
\begin{itemize}
\item To show that $M$ is a graph hypersurface, take $i=\text{identity}$;
\item To show that $M$ is a hypersurface of revolution, consider as $i$ any arbitrary but fixed rotation around the axis of symmetry.
\end{itemize}

Here by a hypersurface (or set, in general) of revolution in Euclidean space $\real{m+1}$ we mean a hypersurface (set) of $\real{m+1}$ which is invariant by the action of $\text{SO}_{l}(m+1)$, the subgroup of the special orthogonal group $\text{SO}(m+1)$ that fixes a given straight line $l$. We will assume that all the sets of revolution that appear together are sets of revolution with respect to the same axis unless otherwise stated.

\begin{lemma}\label{lem:graphical}
Let $N$ be a connected graph translator in $\real{m+1}$. Assume that $M\subset\real{m+1}$ is, outside a compact set $\mathcal{K}\subset \real{m+1}$, a translator which is a graphical perturbation of $N$. Suppose that the boundary of $M$ is graphical (possibly empty). Then $M$ is graphical.
\end{lemma}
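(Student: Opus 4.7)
The plan is to apply the general scheme described immediately before the lemma, with the isometry $i$ taken to be the identity. Writing $N=\Graph(u)$ for $u$ a real-valued function on a domain $U\subset\real{m}$, with graph taken in the direction $v$, and letting $\pi:\real{m+1}\to\real{m}$ denote the projection along $v$, the condition of being ``graphical'' in the sense of Definition~\ref{def:graph_perturbation} is equivalent to every line parallel to $v$ meeting the surface at most once. Consequently, $M$ is graphical if and only if $M\cap (M+tv)=\emptyset$ for every $t>0$. Assume for contradiction that the set $S:=\{t>0:M\cap(M+tv)\neq\emptyset\}$ is nonempty.

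The key observation is that any contact point $p\in M\cap(M+tv)$ must lie in $\mathcal{K}\cup(\mathcal{K}+tv)$: outside $\mathcal{K}$ both $M$ and $M+tv$ are graphs over the same horizontal domain (of $u+\varphi$ and of $u+\varphi+t$, respectively), and these cannot meet when $t>0$. Combined with the compactness of $\mathcal{K}$ and the boundedness of $u+\varphi$ on the compact set $\pi(\mathcal{K})$, this confines possible contacts to a bounded region and forces $S$ to be bounded above, so $t^{*}:=\sup S$ is finite and positive. A standard compactness argument applied to contact points at parameters $t_n\to t^{*}$ produces a point $p_{*}\in M\cap(M+t^{*}v)$; by the definition of $t^{*}$, the surfaces $M$ and $M+tv$ are disjoint for every $t>t^{*}$, so $M+t^{*}v$ lies locally on one side of $M$ near $p_{*}$. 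Provided $p_{*}$ is an interior point of both surfaces, the interior tangency principle (Theorem~\ref{thm:tangency_principle}(a)) forces $M+t^{*}v=M$; thus $M$ would be invariant under the nontrivial vertical translation by $t^{*}v$, which contradicts the fact that $M$ coincides, outside $\mathcal{K}$, with the graph of the single-valued function $u+\varphi$.

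The main technical point is ensuring that $p_{*}$ can be taken as an interior point of both $M$ and $M+t^{*}v$; this is where the graphicality of $\partial M$ intervenes. If $p_{*}\in\partial M$, then $p_{*}-t^{*}v\in M$ shares the $v$-line of $p_{*}$; graphicality of $\partial M$ forbids $p_{*}-t^{*}v\in\partial M$ (otherwise two boundary points would lie on a common $v$-line), so $p_{*}-t^{*}v\in\mathrm{int}(M)$ and hence $p_{*}\in\mathrm{int}(M+t^{*}v)$. A symmetric analysis rules out $p_{*}\in\partial(M+t^{*}v)$, and any remaining ``mixed'' configuration can be handled by approximation, producing nearby interior contacts at parameters close to $t^{*}$ and reducing everything to the interior principle.
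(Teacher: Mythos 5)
Your overall strategy is the same as the paper's: exploit the scheme described just before the lemma with $i=\mathrm{id}$, slide $M$ vertically, use the observation that contact points must involve the compact set $\mathcal{K}$ (outside which the graph condition forbids two points on a common $v$-line), and appeal to the interior tangency principle to force a contradiction. The only presentational difference is that you work with $t^{*}=\sup S$ on the full surface $M$, while the paper truncates $M$ by a large ball $\overline{B_r(0)}\supset\mathcal{K}$ and slides that compact piece; these are essentially equivalent.

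The genuine gap is in the last paragraph, where you dismiss the ``mixed'' contact (say $p_*\in\partial M$ and $p_*\in\mathrm{int}(M+t^*v)$) by appealing to ``approximation, producing nearby interior contacts at parameters close to $t^*$''. This does not go through. For parameters $t<t^*$ you know $t\in S$ is possible, but you have no one-sidedness there: $M+tv$ and $M$ may cross transversally at every contact, so Theorem~\ref{thm:tangency_principle}(a) gives you nothing. Moreover there is no reason these nearby contacts should be interior to both surfaces. The entire leverage of the sliding argument comes from the extremality of $t^{*}$, so ``reducing to the interior principle at $t$ near $t^*$'' simply discards the hypothesis you need. The mixed configuration has to be ruled out by a separate argument. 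In fact the paper faces the same issue and addresses it (somewhat tersely) by replacing $\mathcal{K}$ with a large ball so that the boundary of the slid compact piece is graphical; the intended point is that graphicality of the (new and old) boundary prevents the slid boundary from ever touching $M$ for $t>0$, so that the first contact is forced to be interior-interior. You should either incorporate a similar device or give a genuine boundary-point argument (e.g.\ via a Hopf-lemma version of the tangency principle), rather than a wave at approximation.

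A smaller point worth tightening: you assert ``boundedness of $u+\varphi$ on the compact set $\pi(\mathcal{K})$'' to conclude $S$ is bounded. But $u+\varphi$ is only defined on the (possibly open) domain $U$, and $\pi(\mathcal{K})\cap U$ need not be compact; a priori $u+\varphi$ could be unbounded on it. One should instead argue directly that a contact at parameter $t$ forces both $p$ and $p-tv$ to lie either in $\mathcal{K}$ or in the graph over $\pi(\mathcal{K})\cap U$, and then use the compactness of the sets $M\cap\mathcal{K}$ and $\overline{M}\cap\pi^{-1}(\pi(\mathcal{K}))$ (which is where properness of the embedding enters) to bound the heights.
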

\begin{proof}
Obviously, by definition of graphical perturbation, $M-\mathcal{K}$ is graphical. We have to prove that $M\cap\mathcal{K}$ also is. To this end, just apply the scheme presented above, which clearly works because we deal with a compact region. To avoid contact at the boundary of $M\cap\mathcal{K}$ during step 4, we work from the very beginning with a bigger compact set $\overline{B_{r}(0)}\supset \mathcal{K}$ with $r>0$ sufficiently large so that the boundary created intersecting $M$ with $\overline{B_{r}(0)}$ is graphical. Therefore, the contact at the boundary can occur only when $i(\hat{M})$ comes back to its original position, in which case $M$ is graphical, as claimed.
\end{proof}
\medskip

\begin{corollary}\label{cor:outside_compact_set}
If the definitions given in \ref{def:graph_perturbation} hold outside a compact set, they hold everywhere.
\end{corollary}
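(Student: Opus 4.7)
The plan is to reduce Corollary \ref{cor:outside_compact_set} to Lemma \ref{lem:graphical}, together with a short identification of domains. Write $N=\Graph(u)$ with $u:U\to\real{}$, and let $\varphi_{0}:U\setminus\pi(\mathcal{K})\to\real{}$ be the function supplied by the ``outside $\mathcal{K}$'' hypothesis, so that $M\setminus\mathcal{K}=\Graph(u+\varphi_{0})$ and, in the asymptotic case, $\varphi_{0}(x_{i})\to 0$ whenever $u(x_{i})\to\infty$.

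First, I would apply Lemma \ref{lem:graphical}: since $M$ is a translator that is graphical outside the compact set $\mathcal{K}$ (with graphical boundary), the lemma upgrades local graphicality to a global one, giving $M=\Graph(v)$ for some $v$ defined on $V:=\pi(M)\subset\real{m}$. Comparing the two descriptions outside $\mathcal{K}$ shows that $V$ and $U$ coincide outside the compact set $\pi(\mathcal{K})$, and that $v=u+\varphi_{0}$ on that common set. The key step is then to verify $V=U$: since $V$ and $U$ are connected open sets that agree outside the compact set $\pi(\mathcal{K})$, and since each vertical line over $U$ must meet $M$ exactly once (by the global graph structure supplied by Lemma \ref{lem:graphical} together with the asymptotic matching of $M$ to $N$ outside $\mathcal{K}$), a short topological argument forces $V=U$. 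Setting $\varphi:=v-u$ on $U$ then yields $M=\Graph(u+\varphi)$, which is the desired global graphical perturbation.

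For the asymptotic version, the extension is essentially automatic: $u$ is continuous on the compact set $\pi(\mathcal{K})$ and hence bounded there, so any sequence $\{x_{i}\}\subset U$ with $u(x_{i})\to\infty$ eventually lies in $U\setminus\pi(\mathcal{K})$, where $\varphi$ agrees with $\varphi_{0}$, and the original asymptotic hypothesis gives $\varphi(x_{i})\to 0$. The main obstacle is the equality $V=U$; everything else is bookkeeping from Definition \ref{def:graph_perturbation}. This step genuinely uses the translator hypothesis through Lemma \ref{lem:graphical}, since without global graphicality $M$ could in principle develop vertical tangent planes or extra sheets over $\pi(\mathcal{K})$ that would shrink its effective projection and prevent $\varphi$ from being defined on all of $U$.
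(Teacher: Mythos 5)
Your proposal is correct and follows essentially the same route as the paper: the graphical case is reduced to Lemma~\ref{lem:graphical}, and the asymptotic case is disposed of by observing that the condition $\varphi(x_i)\to 0$ whenever $u(x_i)\to\infty$ is insensitive to what happens over the compact set $\pi(\mathcal{K})$. You supply more bookkeeping than the paper does---in particular, the verification that the domain $V=\pi(M)$ actually coincides with $U$, which the paper's one-line ``is precisely the content of the previous lemma'' implicitly assumes---and that is a reasonable thing to spell out, since ``$M$ is graphical'' (Lemma~\ref{lem:graphical}'s conclusion) is not formally identical to ``$M=\Graph(u+\varphi)$ for $\varphi$ defined on all of $U$.'' The one soft spot is the appeal to ``a short topological argument'' for $V=U$: agreeing with $U$ outside a compact set plus connectedness does not, by itself, rule out $V$ being $U$ with a compact piece removed or added inside $\pi(\mathcal{K})$, so if you want this airtight you should say explicitly which structural fact (e.g.\ that $M\cap\overline{B_r(0)}$ is a compact graph over a region whose boundary is $\partial(U\setminus\pi(\mathcal{K}))$, and graphs over such regions project onto them) actually closes the gap. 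That said, the paper itself does not address this point either, so you are not deviating from the intended level of rigor.
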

\begin{proof}
The case of \emph{graphical perturbation} is precisely the content of the previous lemma \ref{lem:graphical}.\par

With respect to \emph{asymptotic graphical perturbation}, observe simply that this definition is indepedent of what happens in compact regions since it deals with the behaviour of the surfaces at infinity.
\end{proof}
\medskip

\begin{theorem}\label{thm:translator_of_revolution}
Let $N$ be a connected graph translator of revolution in $\real{m+1}$. Suppose that $M$ is, outside a compact set $\mathcal{K}\subset \real{m+1}$, a translator of $\real{m+1}$ which is an asymptotic graphical perturbation of $N$. Assume\- that the boundary of $M$ is graphical (possibly empty) and a set of revolution. Then $M$ is a hypersurface of revolution.
\end{theorem}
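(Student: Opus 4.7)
\emph{Plan.} I follow the scheme described before Lemma \ref{lem:graphical}, taking the isometry $i$ to be an arbitrary rotation $R$ about the axis $l$ of $N$. First apply Lemma \ref{lem:graphical} to conclude that $M$ is globally a graph hypersurface; write $M=\Graph(F)$ over a domain $W\subset \real{m}$. Outside the projection $\pi(\mathcal{K})$ one has $F=u+\varphi$, where $N=\Graph(u)$ and $\varphi\to 0$ at infinity by the asymptoticity hypothesis. Since $N$ is a graph translator of revolution, its axis $l$ is parallel to $\v$, and so $u\circ R = u$ for every $R\in \text{SO}_{l}(m+1)$. Because $\partial M$ is graphical and a set of revolution, both $\partial W$ and $F|_{\partial W}$ are rotation-invariant; combined with the rotation-invariance of the asymptotic base domain of $N$, this forces $W$ itself to be rotation-invariant.

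Fix now any $R\in \text{SO}_{l}(m+1)$. Since $R\v=\v$, the rotated-translated copy of $M$ is $R(M)+t\v = \Graph(F\circ R^{-1}+t)$ over $W$. Consider the height-gap function
$$
h := F - F\circ R^{-1} \quad\text{on } W,
$$
which records the signed vertical distance between $M$ and $R(M)$ over each base point. Two properties are crucial: (i) $h\equiv 0$ on $\partial W$, by the $R$-invariance of $F|_{\partial W}$; and (ii) $h\to 0$ at infinity of $W$, since outside $\pi(\mathcal{K})$ one has $h=\varphi-\varphi\circ R^{-1}$ and $\varphi\to 0$. Hence $\sup_W h$ and $\inf_W h$ are attained on $\overline{W}$.

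Suppose for contradiction $\sup_W h = t^*>0$. By (i) and (ii), $t^*$ is attained at an interior point $p^*\in W$. At $(p^*,F(p^*))\in M$, the translator $R(M)+t^*\v$ passes through the same point and lies weakly above $M$ over all of $W$, so the two translators share a common tangent space at this interior contact point. The interior tangency principle, Theorem \ref{thm:tangency_principle}(a), then yields $R(M)+t^*\v = M$ globally, equivalently $h\equiv t^*$ on $W$. This contradicts $h\to 0$ at infinity, so $\sup h\le 0$; symmetrically $\inf h\ge 0$, hence $h\equiv 0$. Therefore $F\circ R^{-1}=F$, i.e.\ $M$ is $R$-invariant. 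Since $R\in \text{SO}_{l}(m+1)$ was arbitrary, $M$ is a hypersurface of revolution.

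\emph{Expected main obstacle.} The delicate point is confirming that the domain $W$ of $M$ is rotation-invariant, so that $h$ is defined on all of $W$ and the comparison of $M$ with $R(M)$ takes place over a common base. This should follow from the invariance of $\partial W$ and of the asymptotic base domain of $N$, but requires care near the compact transition region corresponding to $\mathcal{K}$. Once this is in place, the rest of the argument is a direct application of the tangency principle together with the asymptotic decay of $\varphi$, the role of the latter being precisely to force the would-be constant $t^*$ given by the tangency conclusion to vanish.
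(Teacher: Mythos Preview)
Your argument is correct and is essentially the paper's proof recast in analytic language: where the paper runs the geometric scheme (translate $\hat M$ up out of a rotation-invariant slab $S$ around $N$, rotate, then lower until first contact), you encode the same procedure via the height-gap $h=F-F\circ R^{-1}$, and your contradiction ``$h\equiv t^*>0$ versus $h\to 0$ at infinity'' is exactly the paper's ``first contact cannot occur at infinity because $\varphi\to 0$.'' The boundary clause matches as well.

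Regarding your flagged obstacle: you can dispense with the ad hoc argument for the rotation-invariance of $W$ by invoking Corollary~\ref{cor:outside_compact_set} directly, as the paper does. That corollary upgrades the hypothesis to ``$M$ is an asymptotic graphical perturbation of $N$ \emph{everywhere},'' which by Definition~\ref{def:graph_perturbation} means $M=\Graph(u+\varphi)$ over the full domain $U$ of $N$. Hence $W=U$, and $U$ is rotation-invariant because $N$ is a hypersurface of revolution. This also gives you $d:=\sup_U|\varphi|<\infty$ (the paper's slab width) for free, which guarantees that $\sup h$ is finite and attained. With this in hand your proof needs no further adjustment.
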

\begin{remark}\leavevmode
Under the hypothesis of Theorem \ref{thm:translator_of_revolution}, if there exists the boundary of $M$, then it is not necessarily connected. For instance, the intersection of a winglike translator with two different and parallel horizontal planes. But, in general, due to the rotational symmetry hypothesis on the boundary of $M$, each connected component of the boundary of $M$ must be contained in a horizontal hyperplane, and indeed it must be a circumference.
\end{remark}

\begin{proof}
We will show that the previous scheme works for any arbitrary but fix element $i$ of $\text{SO}_{l}(m+1)$, where $l$ is the axis of symmetry. Indeed, $l$ must be parallel to the direction of translation $\v=(0,\ldots,0,1)$, other\-wise $M$ would not be graphical. Let us consider such an isometry ~$i$.\par

First, by corollary \ref{cor:outside_compact_set}, $M$ is an asymptotic graphical perturbation of $N$ everywhere. Then, there exists $d<\infty$ (for instance, $d:=\max_{U}|\varphi|$) such that
\begin{equation*}\label{rel:slab}
|\varphi(x)|<d \, \text{ for all } x\in U.
\end{equation*}
Geometrically, this means that $M$ is contained in a slab $S$ of diameter $d$ centered at $N$:
$$S:=\{(s_1,\ldots,s_{m},s_{m+1})\in U\times\real{}:|s_{m+1}-u(s_1,\ldots,s_{m})|\leq d, x\in N\},$$
and $M\subset S$.\\
Since $N$ is a hypersurface of revolution by hypothesis, then $S$ is a set of revolution.

We can easily argue now that our scheme works:
\begin{itemize}
\item Step 2 is trivially possible to do because $M$ is graphical;
\item Step 3 is achievable because
\begin{center}
$M\subset S$ (by construction of $S$), $\, (\hat{M}+t_{0}v)\cap S=\emptyset$ (by step 2) $\Rightarrow i(\hat{M}+t_{0}v)\cap M\subset i(\hat{M}+t_{0}v)\cap S=i\big((\hat{M}+t_{0}v)\cap S\big)=\emptyset$;
\end{center}
\item Step 4:\\
The first point of contact cannot be at infinity because $\varphi$ tends to zero at infinity.\\
Since the boundary is a graphical set of revolution, the first point of contact cannot be at the boundary unless the hypersurface returns to its original position, in which case $i(\hat{M})=M$, as claimed.
\end{itemize}
\end{proof}

\medskip
\begin{corollary}\cite[Theorem A]{fra14}
Let $f:M^m\to\real{m+1}$ be a complete embedded translating soliton of the mean curvature flow with a single end that is smoothly asymptotic to a translating paraboloid. Then, $M=f(M^m)$ is a translating paraboloid.
\end{corollary}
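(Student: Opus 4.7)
The plan is to reduce the corollary to a single application of Theorem \ref{thm:translator_of_revolution}, followed by the classification of rotationally symmetric graph translators. First I would unwind the hypothesis ``smoothly asymptotic to a translating paraboloid $\tp$'' at the unique end of a complete embedded hypersurface $M$: this is essentially tautological with the existence of a compact set $\mathcal{K}\subset\real{m+1}$ such that outside $\mathcal{K}$, $M$ coincides with the graph of $u+\varphi$, where $u:\real{m}\to\real{}$ is the rotationally symmetric function whose graph is $\tp$ and $\varphi\to 0$ at infinity together with its derivatives. Hence $M$ is an asymptotic graphical perturbation of $\tp$ outside $\mathcal{K}$ in the sense of Definition \ref{def:graph_perturbation}.

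Next I would apply Theorem \ref{thm:translator_of_revolution} with $N=\tp$. Because $M$ is complete and embedded, $\partial M=\emptyset$, so the hypothesis on the boundary of $M$ (that it be graphical and a set of revolution) is vacuously satisfied. The theorem then yields that $M$ is a hypersurface of revolution about an axis parallel to $\v$, which must coincide with the axis of $\tp$. In addition, Lemma \ref{lem:graphical} together with Corollary \ref{cor:outside_compact_set} promotes the graphical property to hold on all of $M$, so $M$ is in fact an entire rotationally symmetric graph translator over $\real{m}$.

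The final step is to invoke the Clutterbuck--Schn\"urer--Schulze classification \cite{clutterbuck2007} of complete rotationally symmetric graph translators in $\real{m+1}$: up to translations orthogonal to $\v$, these are exactly the translating paraboloid $\tp$ (an entire graph with one end) and the winglike translators $\tc_R$ (graphs only over the exterior of a disk of radius $R$, each with two ends). Since $M$ is an entire graph with a single end, $M$ must coincide with $\tp$, as claimed. The main (and only) subtle point in this plan is the first step, where one translates ``smoothly asymptotic'' into the quantitative setup of Definition \ref{def:graph_perturbation}; once that is done, Theorem \ref{thm:translator_of_revolution} and the classification from \cite{clutterbuck2007} do all of the real work.
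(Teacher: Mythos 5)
Your proposal follows essentially the same route as the paper: translate the ``smoothly asymptotic'' hypothesis into the definition of an asymptotic graphical perturbation (the paper does this quantitatively by comparing the $O(1/\|x\|)$ expansions of $M$ and $\tp$ from \cite{fra14} and \cite{clutterbuck2007}), then apply Theorem~\ref{thm:translator_of_revolution} with $N=\tp$ and empty boundary. The one place you go further than the paper is the conclusion. Theorem~\ref{thm:translator_of_revolution} only yields that $M$ is a rotationally symmetric graph translator; the paper stops there and leaves implicit the passage to $M=\tp$, whereas you explicitly invoke the Clutterbuck--Schn\"urer--Schulze classification of rotationally symmetric graph translators and the single-end hypothesis to rule out the winglike alternatives. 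That extra step is genuinely needed to finish the argument, so your version is in fact a slightly more complete rendering of the paper's own proof.
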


\begin{proof}
It is a consequence of our previous theorem \ref{thm:translator_of_revolution}, taking $N$ as the translating paraboloid.\\
Observe that the meaning of \emph{smoothly asymptotic} in \cite[Theorem A]{fra14} is that there exists a sufficiently large $r>0$ such that $M-B_r(0)$ can be written as the graph of a function $g$ such that
\begin{align}\label{identity:asymptotic_behaviour}
g(x)=\frac{1}{2}||x||-\frac{1}{2}\log\big(||x||\big)+O\bigg(\frac{1}{||x||}\bigg),
\end{align}
where $||\cdot||$ denotes the usual euclidean norm in $\real{m}$.\\
Now, taking into account that the translating paraboloid is a graph hypersurface for a function $f\in C^{\infty}(\mathbb{R}^{m})$ satisfying the same asymptotic behaviour
\begin{align}\label{identity:asymptotic_behaviour2}
f(x)=\frac{1}{2}||x||-\frac{1}{2}\log\big(||x||\big)+O\bigg(\frac{1}{||x||}\bigg),
\end{align}
then being \emph{smoothly asymptotic} clearly implies being \emph{an asymptotic graphical perturbation}. Indeed, from the relation $g=f+\varphi$ and from \eqref{identity:asymptotic_behaviour} and \eqref{identity:asymptotic_behaviour2}, we deduce that
$$\varphi=g-f=O\bigg(\frac{1}{||x||}\bigg).$$
That is, it is sufficient to consider as $\varphi$ any smooth function such that $\varphi= O\bigg(\frac{1}{||x||}\bigg)$ (as $||x||\to \infty$), i.e., $\varphi(x)\leq \frac{C}{||x||}$ for all $||x||> r$ and for some constant $C\in \mathbb{R}$.
\end{proof}

\section{Compact translators with symmetric boundary}\label{sec:symmetries}
In this section we apply the method of moving planes \cites{alexandrov,schoen,lopez2013} to study compact translators with symmetric boundary.

\begin{theorem}\label{thm:translators_and_symmetries}
Let $M$ be a connected compact embedded translator in $\real{m+1}$ whose boundary consists of two strictly convex curves $\Gamma_1$ and $\Gamma_2$ contained, respectively, in two parallel planes $P_1$ and $P_2$ which are orthogonal to $\v$. Assume that $M$ lies between the two planes $P_1$ and $P_2$, and suppose that the curves $\Gamma_1$ and $\Gamma_2$ are symmetric with respect to a plane $\Pi$ containing the direction of translation $\v$. Then $M$ is symmetric with respect to the plane $\Pi$.
\end{theorem}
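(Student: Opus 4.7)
The plan is to carry out Alexandrov's method of moving planes in the direction orthogonal to $\Pi$, using the tangency principle (Theorem \ref{thm:tangency_principle}) in place of the maximum principle. Choose coordinates in $\real{m+1}$ so that $\v = e_{m+1}$ and $\Pi = \{x_1 = 0\}$; then $P_1$ and $P_2$ are horizontal hyperplanes. For $t \geq 0$, set $\Pi_t := \{x_1 = t\}$, let $R_t$ denote reflection through $\Pi_t$, and define $M^\pm(t) := M \cap \{\pm(x_1 - t) \geq 0\}$ and $M^*(t) := R_t(M^+(t))$. Since $R_t$ fixes $\v$, the reflection $M^*(t)$ is again an embedded translator, meeting $M^-(t)$ along the common fold $M \cap \Pi_t$.

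I will begin with $t = t_{\max} := \max\{x_1 : x \in M\}$, where $M^+(t) = \emptyset$, and decrease $t$ toward $0$. The strict convexity of each $\Gamma_i$ together with its symmetry about $\Pi \cap P_i$ implies, via a short chord argument, that for every $t \in (0, t_{\max})$ the reflected arc $R_t(\Gamma_i \cap \{x_1 \geq t\})$ lies strictly inside the convex region bounded by $\Gamma_i$ in $P_i$, meeting $\Gamma_i$ only at the two endpoints on $\Pi_t \cap P_i$. Consequently no contact between $M^*(t)$ and $M^-(t)$ can occur on $\partial M$ for $t > 0$; combined with the embeddedness of $M$, this ensures that for $t$ slightly below $t_{\max}$, $M^*(t)$ and $M^-(t)$ meet only along the fold and $M^*(t)$ sits locally on the ``$\{x_1 \leq t\}$-side'' of $M^-(t)$.

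Let $t_0 \in [0, t_{\max})$ be the supremum of those $t$ for which either $M^*(t)$ meets $M^-(t)$ outside the fold, or the two surfaces become tangent along the fold. Suppose for contradiction that $t_0 > 0$. If the critical point $p$ is interior to both surfaces, Theorem \ref{thm:tangency_principle}(a) applies and yields $M^*(t_0) = M^-(t_0)$ locally, hence globally by real-analyticity of translators and connectedness. If instead $p$ lies on $M \cap \Pi_{t_0}$ with $e_1 \in T_p M$, the boundary principle Theorem \ref{thm:tangency_principle}(b), applied to small pieces of $M^*(t_0)$ and $M^-(t_0)$ around $p$ (whose boundaries both lie in $\Pi_{t_0}$, with transverse intersection away from a discrete set and sharing the tangent plane at $p$), again forces $M^*(t_0) = M^-(t_0)$. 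Either way $M$ is symmetric about $\Pi_{t_0}$, so each $\Gamma_i$ is symmetric about both $\Pi \cap P_i$ and $\Pi_{t_0} \cap P_i$. The composition of these two reflections is a translation of $P_i$ by $2 t_0 e_1$, and a compact curve cannot be invariant under a nonzero translation. Hence $t_0 = 0$, and passing to the limit yields $R_0(M^+(0)) \subseteq M^-(0)$.

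Running the symmetric argument from the opposite side (moving $\Pi_t$ upward from $\min\{x_1 : x \in M\}$) yields $R_0(M^-(0)) \subseteq M^+(0)$, and the two inclusions together force $R_0(M) = M$, proving the theorem. The main obstacle is the case analysis at the critical parameter $t_0$: one must control all failure modes of the moving-planes process, in particular ruling out first contact on $\partial M$ while $t > 0$ (where the strict convexity of the $\Gamma_i$'s does the work) and verifying that Theorem \ref{thm:tangency_principle} can be applied in the interior and fold cases (where the embeddedness of $M$ supplies the one-sidedness hypothesis).
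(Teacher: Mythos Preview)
Your proposal is correct and follows essentially the same approach as the paper: Alexandrov's method of moving planes combined with the tangency principle, with the contradiction at a premature critical parameter $t_0>0$ coming from the boundary curves acquiring a second plane of symmetry parallel to $\Pi$. The paper organizes the argument around the set $\mathcal{A}$ of ``good'' parameters and tracks explicitly the condition that $M_+(t)$ is a graph over $\Pi$, while you phrase things in terms of first contact, but the substance is the same.
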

\begin{proof}
Without loss of generality, up to a rigid motion, we can assume that
$$P_1=\{(x_1,\ldots,x_{m+1})\in \real{m+1}: x_{m+1}=0\}$$ 
and 
$$\Pi=\{(x_1,\ldots,x_{m+1})\in \real{m+1}: x_1=0\}.$$

We will apply the Alexandrov's method of moving planes (see \cites{alexandrov,schoen}). We will follow the application of this method in \cite[Section 3]{fra14}, including the notation, which we recall briefly:

The family of planes $\{\Pi(t)\}_{t\in\real{}}$ is given by
$$\Pi(t):=\big\{(x_1,\ldots ,x_{m+1})\in\real{m+1}:x_1=t\big\},$$
and given a subset $A$ of $\real{m+1}$, for any $t\in \real{}$ we define the sets
\begin{eqnarray*}
\delta_{t}(A)&:=&\big\{(x_1,\ldots ,x_{m+1})\in A:x_1= t\big\}=A\cap \Pi(t),\\
A_+(t)&:=&\big\{(x_1,\ldots ,x_{m+1})\in A:x_1\ge t\big\},\\
A_-(t)&:=&\big\{(x_1,\ldots ,x_{m+1})\in A:x_1\le t\big\},\\
A^*_+(t)&:=&\big\{(2t-x_1,\ldots ,x_{m+1})\in\real{m+1}:(x_1,\ldots ,x_{m+1})\in A_+(t)\big\},\\
A^*_-(t)&:=&\big\{(2t-x_1,\ldots ,x_{m+1})\in\real{m+1}:(x_1,\ldots ,x_{m+1})\in A_-(t)\big\}.
\end{eqnarray*}

Note that $A^*_+(t)$ and $A^*_-(t)$ are the image of $A_+(t)$ and $A_-(t)$ by the reflection respect to the plane $\Pi(t)$.

Consider the set
$$\mathcal{A} := \{ t\in [0,t_0]: M_{+}(t) \text{ is a graph over } \Pi \text{ and } M_{+}^{\ast}(t)\geq M_{-}(t) \},$$
where $t_0 :=\max \{ t> 0: M\cap \Pi(t)\neq \emptyset \}$ is a positive real number that exists because of the compactness of $M$. Indeed, $q_0:=M\cap \Pi(t_0)$, the first point of contact between $M$ and a vertical plane coming from $+\infty$, must be a boundary point of $M$, otherwise $M$ would coincide with $\Pi(t_0)$ by the interior tangency principle, which is absurd.

Our goal is to prove that $0\in \mathcal{A}$. The proof of this fact will be divided into 3 claims. 

\textbf{Claim 1}. The set $\mathcal{A}-\{t_0\}$ is not empty. Moreover, if $s\in \mathcal{A}$, then $[s,t_0]\in A$.

To show that $\mathcal{A}-\{t_0\}$, we prove that there exists an $\varepsilon>0$ such that $(t_0-\varepsilon,t_0]\subset \mathcal{A}$.\\
First note that $\Gamma_1\cup \Gamma_2$ is a bi-graph over its plane of symmetry $\Pi$ because, by hypothesis, both boundary curves are strictly convex plane curves. Then, in a neighbourhood around $q_0\in \Gamma_1\cup \Gamma_2$, $M$ is a graph over $\Pi$. Otherwise, as $M$ lies between the planes $P_1$ and $P_2$, a neighbourhood of $M$ around $q_0$ would be contain in the plane $P_i$, for some $i\in \{1, 2\}$, that is, $M$ would not be locally around $q_0$ a translator in the direction of $\v$, which is absurd. In other words, since $q_0$ is in $\Gamma_1\cup \Gamma_2$ and it is the first point of contact between $M$ and $\Pi(t_0)$, by continuity this implies that there exists a sufficiently small $\varepsilon>0$ such that $M_{+}(t)$ is a graph over $\Pi(t)$ for every $t\in (t_0-\varepsilon,t_0]$. Moreover, as $M$ is embedded, considering $\varepsilon>0$ smaller if necessary, it holds that $M_{+}^{\ast}(t)\geq M_{-}(t)$ for every $t\in (t_0-\varepsilon,t_0]$.\\
For the second part of the claim, let $\tilde{t}$ be an arbitrary but fixed number in the interval $(s,t_0)$. Our goal is to prove that $\tilde{t}\in \mathcal{A}$. According to the definition of the set $\mathcal{A}$, there are two conditions to be checked, so the proof falls naturally into two parts or steps.

\emph{Step 1}: $M_{+}(\tilde{t})$ is a graph over $\Pi$.\\
As $s\in \mathcal{A}$, then $M_{+}(s)$ is a graph over $\Pi$.
Therefore, $M_{+}(t)$ is a graph over $\Pi$ for every $t\in [s,t_0]$. In particular, $M_{+}(\tilde{t})$ is a graph over $\Pi$.

\emph{Step 2}: $M_{+}^{\ast}(\tilde{t})\geq M_{-}(\tilde{t})$.\\
On the contrary, if $M_{+}^{\ast}(\tilde{t})\ngeq M_{-}(\tilde{t})$, then, by compactness of $M$, there exists a number $t_1\in [\tilde{t},t_0-\varepsilon)$ such that $M_{+}^{\ast}(t_1)-\delta_{t_1}(M)$ and $M_{-}(t_1)-\delta_{t_1}(M)$ intersect for the first time. Furthermore, this first point of contact is an interior point of $M_{+}^{\ast}(t_1)$ and $M_{-}(t_1)$ because the boundary of $M$ consists of two strictly convex plane curves symmetric with respect to $\Pi$. Then $M_{+}^{\ast}(t_1)=M_{-}(t_1)$ by the interior tangency principle. Thus, $\Pi(t_1)\neq \Pi$ would be a plane of symmetry of $M$, hence, in particular, it would be a plane of symmetry of the curves $\Gamma_1$ and $\Gamma_2$, a contradiction.\par

\textbf{Claim 2}. $\mathcal{A}$ is a closed set of the interval $[0,t_0]$.\\
The argument is identical to the one in \cite[Theorem A]{fra14}: it is proved by contradiction, using the sequential characterization of closed sets; first it is assumed that the graphical condition in $\mathcal{A}$ is not satisfied, which contradicts Claim 1; then the graphical condition and the continuity gives the reflection condition in $\mathcal{A}$.

\textbf{Claim 3}. The minimum of the set $\mathcal{A}$ is $0$.\\
We argue by contradiction. Suppose $s_0:=\min \mathcal{A}>0$. We will show that then there exists $\varepsilon_0>0$ such that $s_0-\varepsilon_0 \in \mathcal{A}$, contradicting that $s_0$ is the minimum of $\mathcal{A}$.\\ Again, we divide the proof into two steps.

\emph{Step 1}: There exists $\varepsilon_1 \in (0,s_0)$ such that $M_{+}^{\ast}(s_0-\varepsilon_1)$ is a graph over \nolinebreak $\Pi$.\\
Since $s_0\in \mathcal{A}$, $M_{+}(s_0)$ is a graph over $\Pi$. Moreover, there is no point in $M_{+}(s_0)$ with normal vector included in $\Pi$. If there were such a point, let us say that its first coordinate is $\tilde{t}\in [s_0,t_0)$, then by the tangency principle at the boundary, $M_{+}^{\ast}(\tilde{t})=M_{-}(\tilde{t})$, that is, $\Pi(\tilde{t})$ would be a plane of symmetry of $M$. In particular, $\Pi(\tilde{t})$ would be a plane of symmetry of the curves $\Gamma_1$ and $\Gamma_2$, which contradicts that $\Pi\neq \Pi(\tilde{t})$ also is. Thus,
$$\xi\{M_{+}(s_0)\}\cap \Pi = \emptyset.$$
As $M$ is compact, we have that there exists $\varepsilon_1 \in (0,s_0)$ such that
$$\xi\{M_{+}(s_0)\}\cap \Pi = \emptyset \quad \text{for all } t\in [s_0-\varepsilon_1,s_0].$$
From this fact, together with the compactness of $M$, it follows that $M_{+}(t)$ can be represented as a graph over the plane $\Pi$ for every $t\in [s_0-\varepsilon_1,s_0]$. In particular, $M_{+}^{\ast}(s_0-\varepsilon_1)$ is a graph over $\Pi$ and the proof of this step is complete.

\emph{Step 2}: There exists $\varepsilon_0 \in (0,\varepsilon_1)$ such that $M_{+}^{\ast}(s_0-\varepsilon_0)\geq M_{-}(s_0-\varepsilon_0)$.\\
We are going to show that there exists $\varepsilon_0 \in (0,\varepsilon_1]$ such that
$$M_{+}^{\ast}(t)\cap M_{-}(t)=\delta_t(M) \quad \text{for all } t\geq s_0-\varepsilon_0,$$
which in particular implies that $M_{+}^{\ast}(s_0-\varepsilon_0)\geq M_{-}(s_0-\varepsilon_0)$, and this step will be proved.\\
We argue by contradiction. If it were not true, then there would exist an increasing sequence $\{t_n\}_{n\in  \natural{}}$ converging to $s_0$ such that
$$\left(M_{+}^{\ast}(t_n)\cap M_{-}(t_n)\right)-\delta_{t_n}(M)\neq \emptyset.$$
For each natural $n$, denote by $P_n=(p_1^n,p_2^n,p_3^n)$ a point in the above set. At this point, we make two key observations:
\begin{equation}\label{eq:observation1}
\left(M_{+}^{\ast}(t)\cap M_{-}(t)\right)-\delta_t(M)\subset M_{-}(s_0-\varepsilon_1)\quad \text{for all } t\in [s_0-\varepsilon_1,t_0],
\end{equation}
\begin{equation}\label{eq:observation2}
M_{+}^{\ast}(s_0)\cap M_{-}(s_0)=\delta_{s_0}(M).
\medskip
\end{equation}
\eqref{eq:observation1} follows from Step 1, that is, from the fact that $M_{+}^{\ast}(s_0-\varepsilon_1)$ is a graph over $\Pi$ for every $t\in [s_0-\varepsilon_1,t_0]$. Therefore, 
$$\big( \left(M_{+}^{\ast}(t)\cap M_{-}(t)\right)-\delta_t(M) \big)\cap S=\emptyset,$$
where $S:=\{(x_1,\ldots,x_{m+1})\in \real{m+1}: s_0-\varepsilon_1 \leq x_1\leq t_0\}$, simply because, in plain language,

``\emph{the reflection of a graph over a plane $\Pi$ with respect to this plane $\Pi$ is always on the right hand side of the left part of the graph}'', 

where orientation (right and left) is considered with respect to the plane $\Pi$. This is a direct consequence of the definitions, in particular from the meaning of being a graph over a plane.\\
On the other hand, \eqref{eq:observation2} follows from the fact that $s_0\in \mathcal{A}$. Indeed, if $M_{+}^{\ast}(s_0)\cap M_{-}(s_0)$ were a set bigger than $\delta_{s_0}(M)$, then, as $M_{+}^{\ast}(s_0)\geq M_{-}(s_0)$, there would be a first point of contact between $M_{+}^{\ast}(s_0)$ and $M_{-}(s_0)$, which would be in the interior because the boun\-dary of $M$ consists of two strictly convex plane curves symmetric with respect to $\Pi$. Then by the interior tangency principle both surfaces would coincide, hence $\Pi(s_0)$ would be a symmetric plane of $M$, a contradiction.\\
Let us come back to the sequence $\{P_n\}_{n\in \natural{}}$. By the compactness of $M$, we can assume without loss of generality that this sequence converges to a point $P_{\infty}=(p_1^\infty,p_2^\infty,p_3^\infty)\in M$. Indeed, since $t_n\nearrow s_0$, $P_{\infty}\in M_{+}^{\ast}(s_0)\cap M_{-}(s_0)=\delta_{s_0}(M)$, where the last equality is by \eqref{eq:observation2}. On the other hand, from \eqref{eq:observation1} we see that $p_1^n\leq s_0-\varepsilon_1$ for each $n$. Thus, $p_1^\infty\leq s_0-\varepsilon_1<s_0$, which contradicts that $P_{\infty}\in \delta_{s_0}(M)$.
\end{proof}
\medskip

\begin{remark}
The assumption that $M$ must be between the two pa\-rallel planes $P_1$ and $P_2$ cannot be dropped, as the following counterexam\-ple shows. Consider the intersection of a winglike solution $\tc$ with two horizontal parallel planes $P_1$ and $P_2$, so that the lower one, $P_1$, contains the radius of $\tc$. Observe that the intersection of each of these planes with $\tc$ consists of two concentric circles. The counterexample is the piece of $\tc$ between these two planes and whose boundary is the inner circle in $P_1$ and the outer circle in $P_2$.
\end{remark}
\medskip

\begin{corollary}
In the setting of the previous theorem \ref{thm:translators_and_symmetries}, if $\Gamma_1$ and $\Gamma_2$ are concentric circles, then $M$ is a hypersurface of revolution.
\end{corollary}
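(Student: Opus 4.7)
The plan is to reduce the problem to repeated applications of Theorem \ref{thm:translators_and_symmetries} and then invoke an elementary fact about the orthogonal group.

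First I would identify the candidate axis of revolution. Since $\Gamma_1$ and $\Gamma_2$ are concentric circles contained respectively in the parallel planes $P_1$ and $P_2$, which are both orthogonal to $\v$, the common center of the two circles determines a straight line $l$ through these centers that is perpendicular to $P_1$ and $P_2$, and hence parallel to $\v$. This $l$ is the natural candidate for the axis of revolution of $M$.

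Next I would apply Theorem \ref{thm:translators_and_symmetries} once for each plane $\Pi$ that contains $l$. Any such plane contains the direction $\v$ (because $l$ is parallel to $\v$), and the two concentric circles $\Gamma_1$ and $\Gamma_2$ are symmetric with respect to $\Pi$ (because $\Pi \cap P_i$ passes through the center of $\Gamma_i$, and any circle is symmetric with respect to every line through its center). All hypotheses of Theorem \ref{thm:translators_and_symmetries} are therefore satisfied, so $M$ is symmetric with respect to every plane $\Pi$ containing $l$.

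Finally, I would conclude by a standard group-theoretic observation: the subgroup of the isometry group of $\real{m+1}$ generated by reflections in hyperplanes containing $l$ is exactly $\mathrm{SO}_{l}(m+1)$, since the composition of two reflections in hyperplanes containing $l$ meeting at an angle $\theta/2$ is a rotation about $l$ by angle $\theta$, and varying the hyperplanes produces every such rotation. Being invariant under every such reflection, $M$ is invariant under $\mathrm{SO}_{l}(m+1)$, and hence is a hypersurface of revolution with axis $l$.

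There is no real obstacle here: the only thing to be careful about is verifying that the axis through the common center is genuinely parallel to $\v$ (so that Theorem \ref{thm:translators_and_symmetries} may be invoked in the first place), and that reflections through hyperplanes containing a line generate the full stabilizer subgroup of that line in $\mathrm{SO}(m+1)$. Both are immediate.
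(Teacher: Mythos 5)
Your proof is correct and follows essentially the same approach as the paper: apply Theorem \ref{thm:translators_and_symmetries} to every plane through the common axis of the circles and conclude rotational symmetry. You are somewhat more careful than the paper, which loosely phrases the condition as ``any plane containing the direction of translation $\v$'' rather than any plane containing the specific line $l$ through the common center; your explicit identification of $l$ and the closing group-theoretic remark that reflections in hyperplanes through $l$ generate $\mathrm{SO}_{l}(m+1)$ simply make fully precise what the paper leaves implicit.
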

\begin{proof}
Simply note that, by theorem \ref{thm:translators_and_symmetries}, $M$ is symmetric with respect to any plane $\Pi$ containing the direction of translation $\v$. Therefore, $M$ is a hypersurface of revolution around $\v$.
\end{proof}
\medskip

\begin{corollary}
Theorem \ref{thm:translators_and_symmetries} remains true if the boundary of the translator $M$ is assumed to be only one strictly convex plane curve $\Gamma$.
\end{corollary}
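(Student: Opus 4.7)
The plan is to reduce the corollary to the proof of Theorem \ref{thm:translators_and_symmetries} by supplying a replacement for the missing ``$M$ lies between $P_1$ and $P_2$'' hypothesis and then running the Alexandrov moving planes argument with only cosmetic changes.

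First I would show that $M$ lies entirely on one side of the plane $P$ containing $\Gamma$. The height function $u=\langle f,\v\rangle$ of a translator satisfies $\Delta u+|\nabla u|^{2}=1$ (the identity invoked in the proof of Theorem \ref{thm:non_existence_II}), so $u$ admits no interior local maximum. By compactness of $M$, $u$ attains its global maximum on $\partial M=\Gamma\subset P$, which forces $M\subset Z_{a}^{-}$, where $P=\{x_{m+1}=a\}$. This one-sided containment plays, in what follows, the same role as the two-sided containment between $P_{1}$ and $P_{2}$ played in the proof of Theorem \ref{thm:translators_and_symmetries}.

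After normalising coordinates so that $P=\{x_{m+1}=0\}$ and $\Pi=\{x_{1}=0\}$, I would set up the moving planes machinery $\Pi(t)$, $M_{\pm}(t)$, $M_{\pm}^{\ast}(t)$, $\delta_{t}(M)$, $\mathcal{A}$ exactly as in the proof of Theorem \ref{thm:translators_and_symmetries}, with $t_{0}:=\max\{t>0:M\cap\Pi(t)\neq\emptyset\}$ (positive by compactness). The first contact point $q_{0}$ must lie on $\Gamma$: otherwise $\Pi(t_{0})$, which is a translator since $\v\in\Pi(t_{0})$, would be tangent to $M$ at an interior point with $M$ on one side of it, and the interior tangency principle would force $M\subset\Pi(t_{0})$, contradicting $\partial M=\Gamma\subset P$. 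Strict convexity of $\Gamma$ together with its symmetry about $\Pi$ makes $\Gamma$ a bi-graph over $\Pi$ near $q_{0}$; the one-sided containment from the previous paragraph rules out the ``folding back'' scenario exactly as the two-plane containment did in the original proof, so $M$ is locally a bi-graph over $\Pi$ near $q_{0}$. This is everything that was needed to launch Claim 1.

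Claims 1, 2 and 3 in the proof of Theorem \ref{thm:translators_and_symmetries} then transcribe with one substitution: whenever the original argument invokes ``$\Pi(\tilde t)\neq\Pi$ would be a plane of symmetry of both $\Gamma_{1}$ and $\Gamma_{2}$, a contradiction,'' we instead invoke ``$\Pi(\tilde t)\neq\Pi$ would be a second plane of symmetry parallel to $\Pi$ of the compact curve $\Gamma$, a contradiction,'' which is immediate because the composition of reflections in two distinct parallel planes is a non-trivial translation and no bounded set is invariant under such a translation. The main, and rather mild, obstacle I anticipate is precisely this local bi-graph verification near $q_{0}$ from the half-space condition alone; once that is in place, the rest of the argument is a verbatim transcription of the proof of Theorem \ref{thm:translators_and_symmetries}, and the conclusion $0\in\mathcal{A}$ yields the desired symmetry of $M$ with respect to $\Pi$.
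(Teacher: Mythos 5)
Your proposal is correct and follows the same approach as the paper: establish, via the height function identity $\Delta u + |\nabla u|^2 = 1$ and compactness, that $M$ lies on one side of the plane $P$ containing $\Gamma$, and then observe that the Alexandrov moving-planes argument from Theorem \ref{thm:translators_and_symmetries} goes through unchanged. The paper's own proof is essentially the two-sentence version of this; your elaboration of why the transcription works (in particular the substitution of ``two distinct parallel planes of symmetry of a bounded set force an impossible translational invariance'' for the two-boundary-curve contradiction) is accurate and consistent with what the paper implicitly relies on.
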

\begin{proof}
Observe that in this case the translator lies below the plane $P$ that contains the curve $\Gamma$ because $M$ is compact and, as a translator, its height function cannot attain a local maximum \cite[Lemma 2.1 (d)]{fra14}. Hence, the same argument using the Alexandrov's method proves the corollary.
\end{proof}
\medskip

\begin{corollary}
Let $M$ be a connected compact embedded translator in $\real{m+1}$ whose boundary $\Gamma$ is a $(m-1)$-sphere contained in a hyperplane $P$ orthogonal to $\v$. Then $M$ is the compact piece of the translating paraboloid whose boundary coincides with $\Gamma$.
\end{corollary}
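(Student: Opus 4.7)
The plan is to apply the preceding corollary to extract rotational symmetry from $\Gamma$, and then to identify the resulting rotationally symmetric translator with the piece $\tp_*\cap Z_0^-$ of a bowl via an apex ODE-uniqueness argument.

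First, the $(m-1)$-sphere $\Gamma$ is a strictly convex hypersurface of $P$ and is symmetric with respect to every hyperplane $\Pi$ of $\real{m+1}$ that contains the line $l$ through the centre of $\Gamma$ orthogonal to $P$ (hence parallel to $\v$). The preceding corollary, applied to each such $\Pi$, shows that $M$ is symmetric with respect to every such $\Pi$; the group generated by these reflections contains $\operatorname{SO}_l(m+1)$, so $M$ is a hypersurface of revolution about $l$. Moreover, as in the proof of the preceding corollary, the equation $\Delta u+|\nabla u|^2=1$ rules out interior maxima of the height function, so $M\subset Z_0^-$. Since $M$ is compact, connected and embedded with a single boundary component, the quotient $M/\operatorname{SO}_l(m+1)$ is a compact connected smooth $1$-manifold with a single boundary point (coming from $\Gamma$), hence an arc whose other endpoint lies on $l$. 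Thus $M$ has an apex $p_0\in l$, at which the tangent hyperplane is orthogonal to $\v$.

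Next, let $\tp_*$ be the unique vertical translate of the bowl with $\tp_*\cap P=\Gamma$, set $\tp^-_*:=\tp_*\cap Z_0^-$, and let $\tp'$ be the translate of the bowl whose apex is $p_0$. Near $p_0$, both $M$ and $\tp'$ are rotationally symmetric graphs $z=z(r)$ over the horizontal tangent hyperplane at $p_0$, extending smoothly as even functions of $r$ and satisfying the translator ODE
\begin{equation*}
\frac{z''}{1+(z')^2}+\frac{(m-1)z'}{r}=1,\qquad z(0)=z_0:=p_0\cdot \v,\quad z'(0)=0.
\end{equation*}
The ODE together with evenness determines every Taylor coefficient of $z$ at $r=0$ recursively from $z(0)$, so $M$ and $\tp'$ share the same real-analytic germ at $p_0$; standard ODE uniqueness on $r>0$ then propagates the agreement to the full interval $[0,r_0]$ on which the profile of $M$ is defined. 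Evaluating at the endpoint gives $z_{\tp'}(r_0)=z_M(r_0)=0$, so $\tp'=\tp_*$ and $M\subset \tp_*$. A connectedness argument---the interior of $M$ is a non-empty relatively open-and-closed subset of the component of $\tp_*\setminus\Gamma$ containing $p_0$, namely $\tp^-_*\setminus\Gamma$---concludes $M=\tp^-_*$.

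The main obstacle is the uniqueness statement at the apex: the profile ODE is singular at $r=0$, so Picard--Lindel\"of does not apply verbatim. One either argues via the recursively determined Taylor expansion of the smooth even solution (using that translators are real-analytic by elliptic regularity), or regularises the equation---for instance with a substitution such as $\varphi(r)=z'(r)/r$---to obtain a system amenable to standard uniqueness. Once this is in place, the remainder is routine bookkeeping.
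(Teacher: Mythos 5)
Your argument is correct, but it takes a genuinely different and considerably longer route than the paper's. The paper disposes of this corollary in three lines by a direct comparison: take the compact piece $\tp$ of the translating paraboloid with $\partial\tp=\Gamma$, place it above $P$ on the same vertical axis as the centre of $\Gamma$, and translate it downward until first contact with $M$; whichever of interior or boundary contact occurs, the corresponding tangency principle forces $M$ and the translated $\tp$ to coincide. No rotational symmetry, no ODE analysis, and no real-analyticity is invoked.

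Your proof instead first establishes rotational symmetry of $M$ via the Alexandrov reflection corollaries (you should cite the one-boundary-curve corollary together with the fact that every hyperplane through the centre and axis of the sphere is a plane of symmetry of $\Gamma$, rather than the two-concentric-circles corollary, which is stated for two boundary components), then reduces to a profile ODE. That step does work, and your derivation of the rotationally symmetric translator ODE is correct, but it leads you into the genuine technical issue you flag: uniqueness at the singular point $r=0$ of the profile equation, which requires either the real-analyticity of translators (elliptic regularity with analytic data) and a Taylor-coefficient recursion at the apex, or a regularising change of variables. None of this is needed for the paper's proof. Your approach does buy you the intermediate fact that $M$ is a hypersurface of revolution as a standalone conclusion, and it illustrates a natural two-step symmetrisation-then-classification strategy, but it is heavier than necessary for this statement. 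Also note that describing the quotient $M/\operatorname{SO}_l(m+1)$ as a ``compact connected smooth $1$-manifold with a single boundary point'' is loose terminology---a compact connected $1$-manifold with boundary has zero or two boundary points; what you really mean (and what is correct) is that the profile curve has one endpoint generating $\Gamma$ and the other endpoint forced onto the axis $l$, since an off-axis endpoint would generate a second boundary sphere.
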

\begin{proof}
Let $\tp$ be the compact piece of the translating paraboloid whose boundary coincides with $\Gamma$. Place $\tp$ above the plane $P$ so that its vertex lies on the same line as the center of $\Gamma$. Then translate it down until they ``touch'' for the first time. There are two possibilities: either they intersect for the first time in an interior point or they do it in a boundary point. In any case, the interior or boundary tangency principle tells us they coincide.
\end{proof}

\textbf{Acknowledgments}. The author thanks Professor Francisco Mart\'in for his great assistance and guidance in the whole preparation of this paper. The author also thanks Professor Magdalena Rodr\'iguez for comments and discussions that improved the manuscript.

\begin{bibdiv}
\begin{biblist}

\bib{alexandrov}{article}{
   author={Alexandrov, A.D.},
   title={Uniqueness theorems for surfaces in the large},
   journal={Vestnik Leningrad Univ. Math.},
   volume={11},
   date={1956},
   pages={5--17},
}

\bib{clutterbuck2007}{article}{
  title={Stability of translating solutions to mean curvature flow},
  author={Clutterbuck, J.},
  author={Schn{\"u}rer, O.},
  author={Schulze, F.},
  journal={Calculus of Variations and Partial Differential Equations},
  volume={29},
  number={3},
  pages={281--293},
  year={2007},
  publisher={Springer}
}

\bib{ilmanen}{article}{
   author={Ilmanen, T.},
   title={Elliptic regularization and partial regularity for motion by mean
   curvature},
   journal={Mem. Amer. Math. Soc.},
   volume={108},
   date={1994}
}

\bib{lopez2013}{book}{
   author={L{\'o}pez, R.},
   title={Constant Mean Curvature Surfaces with Boundary},
   series={Springer Monographs in Mathematics},
   publisher={Springer},
   date={2013}
}

\bib{fra14}{article}{
   author={Mart{\'{\i}}n, F.},
   author={Savas-Halilaj, A.},
   author={Smoczyk, K.},
   title={On the topology of translating solitons of the mean curvature flow},
   journal={Calculus of Variations and Partial Differential Equations},
   volume={54},
   number={3},
   date={2015},
   pages={2853-2882}
}

\bib{schoen}{article}{ 
    author={Schoen, R.},
    title={Uniqueness, symmetry, and embeddedness of minimal surfaces},
    journal={J. Differential Geom.},
    volume={18},
    date={1984},
    pages={791--809}
}

\end{biblist}
\end{bibdiv}

\end{document}